\DeclarePairedDelimiter{\floor}{\lfloor}{\rfloor}
\newcommand{\cs}{{\cal S}}
\newcommand{\cx}{{\cal X}}
\newcommand{\ints}{\mathbb{Z}}
\newcommand{\bsa}{\boldsymbol{a}}
\newcommand{\bsb}{\boldsymbol{b}}
\newcommand{\bsc}{\boldsymbol{c}}
\newcommand{\bsu}{\boldsymbol{u}}
\newcommand{\bsx}{\boldsymbol{x}}
\newcommand{\bsy}{\boldsymbol{y}}
\newcommand{\bsz}{\boldsymbol{z}}
\newcommand{\natu}{\mathbb{N}}
\newcommand{\real}{\mathbb{R}}
\newcommand{\tran}{\mathsf{T}} 
\newcommand{\rd}{\mathrm{\, d}}
\newcommand{\var}{{\mathrm{Var}}}
\newcommand{\vol}{{\mathbf{vol}}}
\newcommand{\wt}{\widetilde}
\newcommand{\cc}{{\mathcal C}}
\renewcommand{\emptyset}{\varnothing}
\renewcommand{\ge}{\geqslant}
\renewcommand{\le}{\leqslant}
\newcommand{\dustd}{\mathbf{U}} 
\newcommand{\e}{{\mathbb{E}}} 
\newcommand{\hk}{{\mathrm{HK}}}
  \newcommand{\BIT}{\begin{itemize}}
\newcommand{\EIT}{\end{itemize}}
\newcommand{\BNUM}{\begin{enumerate}}
\newcommand{\ENUM}{\end{enumerate}}
\def\E{\mathbb{E}} 
\theoremstyle{definition}
\newtheorem{definition}{Definition}
\theoremstyle{plain}
\newtheorem{theorem}{Theorem}
\newtheorem{propo}{Proposition}
\newtheorem{lemma}{Lemma}
\theoremstyle{definition}
\newcommand{\bb}{\mathbb{B}}
\newcommand{\bx}{\mathbb{X}}
\newcommand{\bs}{\mathbb{S}}
\newcommand{\diam}{\mathrm{diam}}
\newcommand{\rect}{\mathrm{rect}}
\newcommand{\sumdot}{\text{\tiny$\bullet$}}
\DeclareMathOperator*{\medcup}{\textstyle \bigcup}
\newcommand{\bsk}{\boldsymbol{k}}
\newcommand{\bst}{\boldsymbol{t}}
\newcommand{\bsell}{\boldsymbol{\ell}}
\newcommand{\otos}{{1{:}s}}
\theoremstyle{definition}
\date{March 2015}
\author{Kinjal Basu\\Stanford University \and Art B. Owen\\Stanford University}
\title{Scrambled geometric net integration over general product spaces}
\begin{document}
\maketitle
\begin{abstract}

Quasi-Monte Carlo (QMC) sampling has been developed for integration over 
$[0,1]^s$ where it has superior accuracy to Monte Carlo (MC) for integrands
of bounded variation.  Scrambled net quadrature gives allows replication based
error estimation for QMC with at least the same accuracy and for smooth
enough integrands even better accuracy than plain QMC.
Integration over triangles, spheres, disks and Cartesian products
of such spaces is more
difficult for QMC because the induced integrand on a unit cube may
fail to have the desired regularity.
In this paper, we present a construction of point sets for numerical integration over 
Cartesian products  of $s$ spaces of dimension $d$, with triangles ($d=2$)
being of special interest.
The point sets are transformations of randomized $(t,m,s)$-nets using recursive
geometric partitions. The resulting integral estimates
are unbiased and their variance is $o(1/n)$ for any integrand in $L^2$ of the product space.
Under smoothness assumptions on the integrand, our randomized
QMC algorithm has variance
$O(n^{-1 - 2/d} (\log n)^{s-1})$, for integration over $s$-fold Cartesian products of $d$-dimensional
domains, compared to $O(n^{-1})$ for ordinary Monte Carlo. 

\vspace*{.1cm}
\par\noindent
\textbf{Keywords}: 
Discrepancy,
Multiresolution, 
Rendering,
Scrambled net,  
Quasi-Monte Carlo.
\end{abstract}

\section{Introduction}
Quasi-Monte Carlo (QMC) sampling  is designed for problems of 
integration over the unit cube $[0,1]^s$.
Sampling over more complicated regions, such as the triangle 
or the sphere is more challenging. 
Measure preserving mappings from the unit cube to those spaces 
work very well for plain Monte Carlo. Unfortunately, the composition of the 
integrand with such a mapping may fail to have even the 
mild smoothness properties that QMC exploits. 

In this paper, we consider quasi-Monte Carlo integration
over product spaces of the form $\cx^s$ where $\cx$ is
a bounded set of dimension $d$. 
We are especially interested in cases with $d=2$
such as triangles, spherical triangles, spheres, hemispheres and disks.
Integration over such sets is important in graphical rendering
\citep{carlo2001state}.
For instance, when $\cx$ is a triangle, an integral of the
form $\int_{\cx^2}f(x_1,x_2)\rd x_1\rd x_2$ describes the potential
for light to leave one triangle and reach another. The function $f$
incorporates the shapes and relative positions of these triangles as
well as whatever lies between them.

Recent work by \cite{basu:owen:2014}
develops two QMC methods for  use in the triangle.
One is a lattice like construction that was the first construction
to attain discrepancy $O(\log(n)/n)$ in that space.
The other is a generalization of the van der Corput sequence
that makes a recursive partition of the triangle.

In this paper, we generalize that van der Corput construction from the unit triangle
to some other sets. We also replace the van der Corput
sequence by digital nets in dimension $s$, to obtain QMC points
in $\cx^s$.
The attraction of digital nets is that they can be randomized in order
to estimate our quadrature error through independent replication
of the estimate.  Those randomizations have the further
advantage of reducing the error by about $O(n^{-1/2})$ compared
to unrandomized QMC, when the integrand is smooth enough.
For a survey of randomized QMC (RQMC) in general,
see \cite{lecu:lemi:2002}.
For an outline of QMC for computer graphics, see \cite{kell:2013}.

We study QMC and RQMC estimates of
$$\mu = \frac1{\vol(\cx)^s}\int_{\cx^s}f(\bsx)\rd\bsx.$$
Our estimates are equal weight rules
\begin{align}\label{eq:ourestimate}
\hat\mu = \frac1n\sum_{i=1}^n f(\bsx_i),\quad\text{where}\quad
\bsx_i = \phi(\bsu_i)
\end{align}
for random points $\bsu_i\in[0,1]^s$.
The transformation $\phi$ maps $[0,1)$ into $\cx$ and is
applied componentwise.
We assume throughout that $\vol(\cx)=1$ whenever
we are integrating over $\cx$, which simplifies several expressions.

For any $f\in L^2(\cx^s)$ we find that $\var(\hat \mu) = o(1/n)$,
so it is asymptotically superior to plain Monte Carlo.
We also find that for each finite $n$, scrambled nets have a variance
bounded by a finite multiple of the Monte Carlo variance, uniformly
over all $f\in L^2(\cx^s)$.

Our main result is that under smoothness conditions on $f$ 
and a sphericity constraint on the partitioning of $\cx$ 
we are able to show that the estimate
\eqref{eq:ourestimate} attains
\begin{equation}\label{eq:mainresult}
\var(\hat{\mu}) = O\left(\frac{(\log n)^{s-1}}{n^{1 + 2/d}}\right)
\end{equation}
when $\bsu_i$ are certain scrambled digital nets.
This  variance rate is obtained via a functional ANOVA decomposition of the integrand.
The case $d=1$ in~\eqref{eq:mainresult} corresponds to the rate
for scrambled net RQMC from \cite{smoovar}.
The primary technical challenge in lifting that result from $d=1$ to $d>1$ is
to show that the composition $f\circ\phi$ is a well-behaved integrand.

The statements above are for integration over $\cx^s$ but our proof
of the main result is for integration over  $\prod_{j=1}^s\cx^{(j)}$
where $\cx^{(j)}\subset\real^d$ are potentially different sets of dimension $d$. 
Some of our results allow different dimensions $d_j$ for the spaces $\cx^{(j)}$.

The rest of the paper is organized as follows. In Section~\ref{sec:background}, we
give background material on digital nets and their scrambling.
In Section~\ref{sec:splits}, we present recursive geometric splits of a region
$\cx\subset\real^d$ and geometric van der Corput sequences based on them.
Section~\ref{sec:nets} generalizes those constructions to Cartesian products
of $s\ge1$ such sets.
Section~\ref{sec:mrotos} presents the ANOVA and multiresolution analysis
of the Cartesian product domains we study.
Those domains are not rectangular and we embed them in rectangular
domains and extend the integrands to rectangular domains as
described in Section~\ref{sec:extension}.  We use both a Whitney extension
and a Sobol' extension and give new results for the latter.
The proof of our main result is in Section~\ref{sec:snetvar}.
Section~\ref{sec:disc} compares the results we obtain to plain QMC
and scrambled nets over $sd$ dimensions. 

We conclude this section by citing some related work on QMC over tensor product spaces.
Tractability results have been obtained for integration over the
$s$-fold product of the hypersphere 
$S^d=\{\bsx\in\real^{d+1}\mid\bsx^\tran\bsx=1\}$
by \cite{kuo:sloa:2005}. \cite{qmc:trac:simplices} obtained such results
for the $s$-fold product of the simplex $T^d
=\{\bsx\in[0,1]^{d}\mid\sum_jx_j\le1\}$.
Those results are non-constructive. For $s$-fold tensor products of $S^2$ 
there is a component-by-component construction
by \cite{hesse2007component}.

\section{Background on QMC and RQMC}\label{sec:background}

Both QMC and ordinary Monte Carlo (MC) correspond to
the case with $d=1$ and $\cx=[0,1]$.
Plain Monte Carlo sampling of $[0,1]^s$ takes $\bsx_i\sim\dustd[0,1]^s$.
The law of large numbers gives $\hat \mu\to \mu$ with probability one
when $f\in L^1$.  If also $f\in L^2$ then the root mean square error
is $\sigma/\sqrt{n}$ where $\sigma^2$ is the variance of $f(\bsx)$
for $\bsx\sim\dustd[0,1]^s$.

QMC sampling improves upon MC by taking $\bsx_i$ more
uniformly distributed in $[0,1]^s$ than random points usually are. 
Uniformity is measured via discrepancy.  
The local discrepancy of $\bsx_1,\dots,\bsx_n\in[0,1]^s$
at point $\bsa\in[0,1]^s$ is
$$
\delta(\bsa)  = \delta(\bsa;\bsx_1,\dots,\bsx_n) 
= \frac1n\sum_{i=1}^n1_{\bsx_i\in[0,\bsa)}-\vol( [0,\bsa)).
$$
The star discrepancy of those points is
$$
D_n^*(\bsx_1,\dots,\bsx_n) = D_n^* = \sup_{\bsa\in[0,1]^d}|\delta(\bsa)|.
$$
The Koksma-Hlawka inequality is
$$
|\hat \mu-\mu| \le D_n^* V_{\hk}(f),
$$
where $V_{\hk}$ is the $s$-dimensional total variation  of $f$
in the sense of Hardy and Krause.
For a detailed account of $V_\hk$ see \cite{variation}.
Numerous constructions are known for which
$D_n^* = O((\log n)^{s-1}/n)$ \citep{nied92} and so
QMC is asymptotically much more accurate than MC
when $V_{\hk}(f)<\infty$.

\subsection{Digital nets and sequences}
Of special interest here are QMC constructions 
known as digital nets \citep{nied87,dick:pill:2010}. 
We describe them through a series of definitions. 
Throughout these definitions $b\ge2$ is an integer
base, $s\ge1$ is an integer dimension and
$\ints_b=\{0,1,\dots,b-1\}$.

\begin{definition}
For $k_j\in\natu$ and $c_j\in\ints_b$ for $j=1,\dots,s$, the set 
$$
\prod_{j=1}^s\Bigl[ \frac{c_j}{b^{k_j}},\frac{c_j+1}{b^{k_j}}\Bigr) 
$$
is a $b$-adic box of dimension $s$. 
\end{definition}

\begin{definition}
For integers $m\ge t\ge0$,
the points $\bsx_1,\dots,\bsx_{b^m}\in[0,1]^s$ 
are a $(t,m,s)$-net in base $b$
if every $b$-adic box of dimension $s$ with volume $b^{t-m}$
contains precisely $b^t$ of the $\bsx_i$. 
\end{definition}

The nets have good equidistribution (low discrepancy) because
boxes $[0,\bsa]$ can be efficiently approximated by unions of
$b$-adic boxes. Digital nets can attain a discrepancy of
$O((\log(n))^{s-1}/n)$. 

\begin{definition}
For integer $t\ge0$, the infinite sequence
$\bsx_1,\bsx_2,\dots\in[0,1]^s$ is a $(t,s)$-sequence in base $b$
if the subsequence $\bsx_{1+r b^m},\dots,\bsx_{(r+1)b^m}$
is a $(t,m,s)$-net in base $b$ for all integers $r\ge0$ and $m\ge t$.
\end{definition}

The $(t,s)$-sequences (called digital sequences) are extensible
versions of $(t,m,s)$-nets.  
They attain a discrepancy of $O((\log(n))^{s}/n)$.
It improves to  $O((\log(n))^{s-1}/n)$
along the subsequence $n=\lambda b^m$ for integers
$m\ge0$ and $1\le\lambda<b$.

\subsection{Scrambling}
Here we consider scrambling of digital nets and give several
theorems for $[0,1)^s$ that we generalize to $\cx^s$.
Let $a\in[0,1)$
have base $b$ expansion
$a=\sum_{k=1}^\infty a_{k}b^{-k}$  where $a_{k}\in\ints_b$. If $a$
has two base $b$ expansions, we take the one with a tail of $0$s,
not a tail of $b-1$s.
We apply random permutations to the digits $a_k$ yielding $x_k\in\ints_b$
and deliver $x=\sum_{k=1}^\infty x_{k}b^{-k}$.
There are many different ways to choose the permutations \citep{altscram}.
Here we present the nested uniform scramble from \cite{rtms}.

In a nested uniform scramble, $x_1=\pi_{\sumdot}(a_1)$
where $\pi_\sumdot$ is a uniform random permutation (all $b!$
permutations equally probable).
Then $x_2 = \pi_{\sumdot a_1}(a_2)$,
$x_3 = \pi_{\sumdot a_1a_2}(a_2)$ and
$x_{k+1}= \pi_{\sumdot a_1a_2\dots a_{k}}(a_{k+1})$ where all of these
permutations are independent and uniform. Notice that the permutation
applied to digit $a_{k+1}$ depends on the previous digits.
A nested uniform scramble of $\bsa=(a_1,\dots,a_s)\in[0,1)^s$ 
applies independent nested uniform scrambles to all $s$
components of $\bsa$, so that
$x_{j,k+1}=\pi_{j\sumdot a_{j1}a_{j2},\dots,a_{jk}}(a_{j,k+1})$.
A nested uniform scramble of $\bsa_1,\dots,\bsa_n\in[0,1)^s$ applies the same set of
permutations to the digits of all $n$ of those points.
Propositions~\ref{prop_unif} and~\ref{prop_preserve} are
from \cite{rtms}.

\begin{propo} 
\label{prop_unif}
Let $\bsa\in[0,1)^s$ and let $\bsx$ be the result of a nested uniform random 
scramble of $\bsa$. Then $\bsx\sim\dustd[0,1)^s$. 
\end{propo}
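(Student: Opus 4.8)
The plan is to reduce to one dimension and then identify the law of the scrambled point by computing the joint distribution of its base-$b$ digits. Since a nested uniform scramble of $\bsa=(a_1,\dots,a_s)$ applies independent scrambles to the $s$ coordinates, and a vector of independent $\dustd[0,1)$ components is $\dustd[0,1)^s$, it suffices to treat $s=1$. So fix $a\in[0,1)$ with its chosen expansion $a=\sum_{k\ge1}a_kb^{-k}$ (tail of $0$s, not $b-1$s), and let $x=\sum_{k\ge1}x_kb^{-k}$ be its scramble, where $x_1=\pi_{\sumdot}(a_1)$ and $x_{k+1}=\pi_{\sumdot a_1\cdots a_k}(a_{k+1})$, all the $\pi$'s being independent uniform random permutations of $\ints_b$. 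I claim $x\sim\dustd[0,1)$.

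The key step is to show by induction on $m$ that the digit vector $(x_1,\dots,x_m)$ is uniformly distributed on $\ints_b^m$. For a uniform random permutation $\pi$ of $\ints_b$ and any fixed $c\in\ints_b$, the value $\pi(c)$ is uniform on $\ints_b$; applied with $c=a_1$ this gives $x_1\sim\dustd\ints_b$, the base case. For the inductive step, condition on the permutations $\pi_{\sumdot},\pi_{\sumdot a_1},\dots,\pi_{\sumdot a_1\cdots a_{k-1}}$, which together determine $(x_1,\dots,x_k)$. The permutation $\pi_{\sumdot a_1\cdots a_k}$ is, by the design of the nested scramble, independent of all of these and uniform, so conditionally $x_{k+1}=\pi_{\sumdot a_1\cdots a_k}(a_{k+1})$ is uniform on $\ints_b$ and independent of $(x_1,\dots,x_k)$. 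Hence $(x_1,\dots,x_{k+1})$ is uniform on $\ints_b^{k+1}$, closing the induction.

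Finally I would translate this digitwise fact into a statement about $x$. For each $m$, uniformity of $(x_1,\dots,x_m)$ on $\ints_b^m$ means $\P(x\in[cb^{-m},(c+1)b^{-m}))=b^{-m}$ for every $c\in\{0,1,\dots,b^m-1\}$, i.e.\ $x$ assigns Lebesgue measure to every $b$-adic interval. Since the $b$-adic intervals form a $\pi$-system generating the Borel $\sigma$-algebra of $[0,1)$, the law of $x$ must equal Lebesgue measure, so $x\sim\dustd[0,1)$. The only point needing a word of care is the two-expansion convention: the event that $x$ has a tail of $(b-1)$s is, for each $K$, contained in the event $\{x_k=b-1\ \forall k\ge K\}$, whose probability is $\lim_m b^{-(m-K+1)}=0$, so it does not affect the law. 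Since the argument is short and elementary, there is no serious obstacle; the one thing to be careful about is the conditioning in the induction, namely that $\pi_{\sumdot a_1\cdots a_k}$ is genuinely independent of everything used to build $x_1,\dots,x_k$, which is precisely what the nested construction guarantees.
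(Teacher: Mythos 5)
Your proof is correct, and it is essentially the standard argument: the paper itself offers no proof of this proposition but cites \cite{rtms}, where the result is established by exactly this kind of digit-by-digit reasoning (the scrambled digits are i.i.d.\ uniform on $\ints_b$ because each new digit is produced by a fresh, independent uniform permutation applied to a deterministic digit of $\bsa$, and coordinates are scrambled independently). Your handling of the two technical points --- identifying $b$-adic interval probabilities with digit-prefix probabilities up to the null event of a trailing string of $(b-1)$s, and invoking the $\pi$-system uniqueness argument --- is exactly what is needed, so there is nothing to add.
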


\begin{propo}
\label{prop_preserve}
If the sequence $\bsa_1,\dots,\bsa_n$ is a $(t,m,s)$-net in base $b$, 
and $\bsx_i$ are a nested uniform scramble of $\bsa_i$,
then $\bsx_i$ are a $(t,m,s)$-net in base $b$ with probability 1. 
Similarly if $\bsa_i$ is a $(t,s)$-sequence in base $b$, 
then $\bsx_i$ is a $(t,s)$-sequence in base $b$ with probability 1.
\end{propo}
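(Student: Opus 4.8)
The plan is to show that a nested uniform scramble acts on $[0,1)^s$, off a single measure‑zero set, as a volume‑preserving bijection that permutes the $b$-adic boxes among themselves while preserving their exponents $(k_1,\dots,k_s)$, hence their volumes. Granting this, the $(t,m,s)$-net property transfers at once: a prescribed $b$-adic box $B$ of volume $b^{t-m}$ is the image under the scramble of a unique $b$-adic box $B'$ of the same volume, so the number of scrambled points $\bsx_i$ in $B$ equals the number of unscrambled points $\bsa_i$ in $B'$, and the latter is $b^t$ because $\bsa_1,\dots,\bsa_{b^m}$ is a $(t,m,s)$-net.

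First I would record the underlying one‑dimensional fact. Fix a coordinate $j$ and a depth $k\ge1$. From the recursion $x_{j,\ell+1}=\pi_{j\sumdot a_{j1}\cdots a_{j\ell}}(a_{j,\ell+1})$, the first $k$ scrambled digits $(x_{j1},\dots,x_{jk})$ are obtained from $(a_{j1},\dots,a_{jk})$ by a map that is ``triangular'': $x_{j\ell}$ depends only on $a_{j1},\dots,a_{j\ell}$, and for each fixed prefix the dependence on $a_{j\ell}$ is a permutation of $\ints_b$. Any such map is a bijection of $\ints_b^k$ onto itself. Hence the set of $a\in[0,1)$ whose first $k$ base-$b$ digits form a prescribed string is carried onto the set of $x$ whose first $k$ digits form the image string; that is, each $b$-adic interval $[c/b^{k},(c+1)/b^{k})$ maps onto another $b$-adic interval of the same length $b^{-k}$. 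Applying this in each of the $s$ coordinates, with independent permutation families, shows that a $b$-adic box $\prod_{j}[c_j/b^{k_j},(c_j+1)/b^{k_j})$ maps onto a $b$-adic box with the same exponents $k_j$, hence of volume $b^{-\sum_j k_j}$.

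The step that requires care, and where ``with probability $1$'' enters, is reconciling this digit‑level bijection with genuine set membership, since $b$-adic rationals have two base-$b$ expansions and we have fixed the tail‑of‑$0$s convention. The only bad event is that some scrambled coordinate of some $\bsx_i$ has canonical expansion ending in a tail of $(b-1)$s, in which case its canonical digits differ from the digits produced by the scramble and the interval‑to‑interval correspondence above could be misread. But conditional on all earlier digits the permutation $\pi_{j\sumdot a_{j1}\cdots a_{j\ell}}$ maps its (deterministic) input digit to $b-1$ with probability $1/b$, and across $\ell$ these permutations are distinct and independent, so the probability of a tail of $(b-1)$s in any given coordinate is $0$. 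Taking the union over the finitely many coordinates $j\le s$ and points $i\le n$, with probability one the canonical expansion of every $\bsx_i$ coincides with the expansion produced by the scramble, and then membership of $\bsx_i$ in any $b$-adic box is read off correctly from those digits. On that probability‑one event the counting argument of the first paragraph is valid for every $b$-adic box of volume $b^{t-m}$, proving the net statement.

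Finally, for a $(t,s)$-sequence $\bsa_1,\bsa_2,\dots$, observe that restricting the same permutation families to a sub‑block of the points is again a nested uniform scramble of that block. For each pair of integers $r\ge0$ and $m\ge t$, the block $\bsa_{1+rb^m},\dots,\bsa_{(r+1)b^m}$ is a $(t,m,s)$-net, so by the case already proved its scramble $\bsx_{1+rb^m},\dots,\bsx_{(r+1)b^m}$ is a $(t,m,s)$-net with probability one; intersecting over the countably many pairs $(r,m)$ yields a probability‑one event on which $\bsx_1,\bsx_2,\dots$ is a $(t,s)$-sequence. I expect the digit/expansion bookkeeping of the third paragraph to be the only genuine obstacle — the rest is the clean observation that nested uniform scrambling respects the $b$-adic filtration.
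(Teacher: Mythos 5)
Your proof is correct, and it is essentially the standard argument: the paper itself gives no proof of this proposition (it is quoted from \cite{rtms}), and the argument there proceeds just as you do, by observing that nested uniform scrambling maps each $b$-adic box bijectively onto a $b$-adic box with the same exponents, so point counts transfer, with the tail-of-$(b-1)$s digit issue being exactly the measure-zero event responsible for the ``with probability 1'' qualifier. Your handling of that event and of the sequence case (restricting to blocks and intersecting countably many probability-one events) is sound.
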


In scrambled net quadrature we estimate 
$\mu = \int_{[0,1)^s} f(\bsx)\rd\bsx$ by 
\begin{align}\label {eq:snetquad}
\hat \mu=\frac1n\sum_{i=1}^nf(\bsx_i),
\end{align}
where $\bsx_i$ are a nested uniform scramble of a digital net $\bsa_i$. 

It follows from Proposition~\ref{prop_unif} that
$\e(\hat \mu)=\mu$ for $f\in L^1[0,1)^s$.
When $f\in L^2[0,1)^s$ we can use independent
random replications of the scrambled nets to estimate 
the variance of $\hat \mu$.
If $V_\hk(f)<\infty$ then we obtain 
$\var(\hat \mu) = O( \log(n)^{2(s-1)}/n^2)=O(n^{-2+\epsilon})$
for any $\epsilon>0$ directly from the Koksma-Hlawka inequality.
Surprisingly, scrambling the net has the potential to
improve accuracy:
\begin{theorem}\label{thm:smooth}
Let $f:[0,1]^s\to\real$ with continuous $\frac{\partial^s}{\partial x_1\cdots \partial x_s} f$.
Suppose that 
$\bsx_i$ are a nested uniform scramble of the first $n=\lambda b^m$
points of a $(t,s)$-sequence in base $b$, for $\lambda\in\{1,2,\dots,b-1\}$.
Then for $\hat\mu$ given by~\eqref{eq:snetquad},
$$
\var(\hat \mu) = O\Bigl( \,\frac{\log(n)^{s-1}}{n^3}\,\Bigr) = O(n^{-3+\epsilon})
$$
as $n\to\infty$ for any $\epsilon>0$.
\end{theorem}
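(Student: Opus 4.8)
The strategy is to reproduce, in the present notation, the argument for scrambled net quadrature of smooth functions over the cube, which is exactly the $d=1$ instance we must recover. Decompose $f$ by its functional ANOVA, $f=\sum_{u\subseteq\otos}f_u$, where $f_u$ depends only on the coordinates $x_j$, $j\in u$, has mean zero for $u\neq\emptyset$, and is orthogonal in $L^2$ to $f_v$ for $v\neq u$. Write $\hat\mu_u=\frac1n\sum_{i=1}^n f_u(\bsx_i)$, so $\hat\mu-\mu=\sum_{u\neq\emptyset}\hat\mu_u$. A basic property of the nested uniform scramble, inherited from its action on $b$-adic boxes (the same mechanism behind Proposition~\ref{prop_unif}), is that $\Cov(\hat\mu_u,\hat\mu_v)=0$ for $u\neq v$, hence $\var(\hat\mu)=\sum_{u\neq\emptyset}\var(\hat\mu_u)$. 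It therefore suffices to bound each component variance and add up.

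Fix $u$ and expand $f_u$ in a tensor product of one‑dimensional Haar‑type wavelets: $L^2$‑normalized mean‑zero functions supported on $b$‑adic intervals $[cb^{-k},(c+1)b^{-k})$. Grouping squared coefficients by the vector $\bsk=(k_j)_{j\in u}\in\natu^{|u|}$ of resolution levels gives a level variance $\sigma^2_{\bsk,u}$ with $\sum_{\bsk}\sigma^2_{\bsk,u}=\var(f_u)$. A level‑$k$ wavelet is supported on an interval of width $b^{-k}$, has $L^1$ norm $O(b^{-k/2})$, and is orthogonal to constants, so the hypothesis that $\partial^s f/\partial x_1\cdots\partial x_s$ is continuous (hence that $f$, and each $f_u$, has bounded $b$‑adic mixed differences of the relevant orders on the compact cube) yields squared tensor coefficients of size $O(b^{-3|\bsk|})$; summing over the $\prod_j b^{k_j}$ positions gives $\sigma^2_{\bsk,u}=O(b^{-2|\bsk|})$ with the implied constant depending only on $f$ and $b$. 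On the sampling side, scrambled net variance has the form $\var(\hat\mu_u)=\frac1n\sum_{\bsk}\Gamma_{\bsk,u}\,\sigma^2_{\bsk,u}$ with nonnegative gain coefficients. For the first $n=\lambda b^m$ points of a scrambled $(t,s)$‑sequence we use two facts: $\Gamma_{\bsk,u}\le\Gamma^\star$ for a constant $\Gamma^\star$ depending only on $b$ and $t$; and $\Gamma_{\bsk,u}=0$ whenever $|\bsk|\le m-t$, since those points form a concatenation of $\lambda$ scrambled $(t,m,s)$‑nets (Proposition~\ref{prop_preserve}), each of which integrates every such low‑resolution wavelet product exactly by the $b$‑adic equidistribution of a $(t,m,s)$‑net.

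Combining the two sides, $\var(\hat\mu)\le\frac{\Gamma^\star}{n}\sum_{u\neq\emptyset}\sum_{|\bsk|>m-t}\sigma^2_{\bsk,u}\le\frac{C}{n}\sum_{K>m-t}\#\{\bsk:\ k_j\ge1,\ |\bsk|=K\}\,b^{-2K}$. The number of such $\bsk$ of length $r\le s$ is $\binom{K-1}{r-1}=O(K^{s-1})$, so the tail sum is $O\bigl((m-t)^{s-1}b^{-2(m-t)}\bigr)=O(m^{s-1}b^{-2m})$. Since $n=\lambda b^m$ with $\lambda<b$, we have $b^{-2m}=O(n^{-2})$ and $m=O(\log n)$, whence $\var(\hat\mu)=O\bigl((\log n)^{s-1}n^{-3}\bigr)$; the $O(n^{-3+\epsilon})$ form follows from $(\log n)^{s-1}=O(n^\epsilon)$.

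The main obstacle is the scrambled‑net variance machinery: the ANOVA variance orthogonality under nested uniform scrambling and, above all, the two properties of the gain coefficients $\Gamma_{\bsk,u}$ (uniform upper bound, and vanishing below the net's resolution). These are precisely the tools developed for the cube in \cite{smoovar}, and this statement is their specialization; the only extra bookkeeping is passing from $b^m$ to $\lambda b^m$ points, which is handled by treating the point set as $\lambda$ stacked $(t,m,s)$‑nets. The smoothness hypothesis, by contrast, enters only through the elementary coefficient‑decay estimate $\sigma^2_{\bsk,u}=O(b^{-2|\bsk|})$, and weakening it (to bounded variation, say) would simply degrade that exponent and hence the final rate.
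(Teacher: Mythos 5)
Your argument is correct in substance, but it is worth noting that the paper does not actually prove this theorem: its ``proof'' is a citation to \cite{smoovar} (which established the result under an extra Lipschitz condition) and to \cite{owen2008local} (which removed that condition and repaired a lemma). What you have written is a reconstruction of that cited argument --- ANOVA decomposition, Haar-type multiresolution, coefficient decay $O(b^{-3|\bsk|})$ per coefficient hence $O(b^{-2|\bsk|})$ per level, gain coefficients bounded and vanishing below the net's resolution, then a geometric tail sum --- and it is in fact exactly the $d=1$ specialization of the machinery the paper itself develops later (Sections~\ref{sec:mrotos}--\ref{sec:snetvar}, culminating in Theorem~\ref{thm:main}). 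So your route is the ``real'' proof where the paper outsources it; the paper's citation buys brevity and rigor on the two delicate points you quote as black boxes, namely the gain-coefficient bounds from \cite{snxs,owensinum} and the removal of the Lipschitz hypothesis. Three details you gloss over but should acknowledge: (i) the uniform gain bound depends on $s$ (and on $\lambda$), not only on $b$ and $t$; (ii) for $n=\lambda b^m$ with $\lambda>1$ the union of $\lambda$ scrambled $(t,m,s)$-nets does give zero contribution at low resolution blockwise, but the bounded-gain side requires handling cross-block covariances (a Cauchy--Schwarz over blocks, or the known bound for the first $\lambda b^m$ points of a scrambled $(t,s)$-sequence), which your ``stacked nets'' remark does not quite supply; and (iii) the coefficient-decay step is applied to the ANOVA components $f_u$, so you need their smoothness, which is the content of the paper's Lemma~\ref{smoothfu1} (and is where \cite{owen2008local} does the careful work without a Lipschitz assumption). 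None of these is a genuine gap in the approach, only bookkeeping you would need to spell out to make the sketch self-contained.
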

\begin{proof}
\cite{smoovar} has this under a Lipschitz condition.
\cite{owen2008local} removes that condition and corrects
a Lemma from the first paper.
\end{proof}

Smoothness is not necessary for 
scrambled nets to attain a better rate than Monte Carlo.
Bounded variation is not even necessary:
\begin{theorem}\label{thm:littleo}
Let $\bsx_1,\dots,\bsx_n$ be a nested uniform scramble of 
a $(t,m,s)$-net in base $b$. Let $f\in L^2([0,1]^s)$. 
Then for $\hat\mu$ given by~\eqref{eq:snetquad},
$$
\var(\hat \mu) = o\Bigl( \frac1n\Bigr) 
$$
as $n\to\infty$. 
\end{theorem}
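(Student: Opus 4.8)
The plan is to reduce the theorem to the classical $d=1$ case (i.e. the unit cube), since the scrambled net $\bsx_i \in [0,1)^s$ already lives in the cube and the statement is about $f \in L^2([0,1]^s)$ with no extra structure. The natural approach is the ANOVA / multiresolution argument of Owen: decompose $f$ using a wavelet-like basis adapted to the $b$-adic boxes, express $\var(\hat\mu)$ as a sum of variance contributions indexed by those boxes, and show that each contribution is controlled by the $L^2$ mass of $f$ sitting at that scale, with a gain factor that vanishes as the scale refines.

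First I would set up the orthogonal decomposition of $L^2([0,1]^s)$ into spaces spanned by tensor products of Haar-like (Walsh-type, base-$b$) functions that are constant on $b$-adic boxes. Write $f = \sum_{\boldsymbol k} f_{\boldsymbol k}$ where $\boldsymbol k = (k_1,\dots,k_s)$ records the resolution level in each coordinate and $f_{\boldsymbol k}$ is the ``detail'' at that multi-resolution level, with $\|f\|^2 = \sum_{\boldsymbol k}\|f_{\boldsymbol k}\|^2 < \infty$. The key structural fact, from the theory of scrambled $(t,m,s)$-nets, is a \emph{gain coefficient} bound: for a scrambled net, $\var(\hat\mu) = \tfrac1n\sum_{\boldsymbol k} \Gamma_{\boldsymbol k}\,\|f_{\boldsymbol k}\|^2$ (up to the usual interpretation), where $0 \le \Gamma_{\boldsymbol k} \le C_{b,t,s}$ is uniformly bounded by a constant depending only on $b,t,s$, and — crucially — $\Gamma_{\boldsymbol k} \to 0$ as $|\boldsymbol k| = k_1+\cdots+k_s \to \infty$. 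The uniform bound $C_{b,t,s}$ immediately gives the ``finite multiple of MC variance'' claim mentioned in the text; the decay of $\Gamma_{\boldsymbol k}$ is what upgrades $O(1/n)$ to $o(1/n)$.

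The $o(1/n)$ conclusion then follows from a truncation argument: split the index set into $|\boldsymbol k| \le K$ and $|\boldsymbol k| > K$. For the finitely many low-resolution terms, $\Gamma_{\boldsymbol k}$ is eventually zero (a scrambled net integrates low-resolution pieces exactly once $n = b^m$ is large relative to the resolution), so their contribution vanishes for $n$ large. For the high-resolution tail, bound $\Gamma_{\boldsymbol k} \le C_{b,t,s}$ and use that $\sum_{|\boldsymbol k|>K}\|f_{\boldsymbol k}\|^2 \to 0$ as $K \to \infty$ by $L^2$-convergence of the wavelet expansion. Choosing $K = K(n) \to \infty$ slowly enough that all $|\boldsymbol k| \le K(n)$ terms are integrated exactly, one gets $n\var(\hat\mu) \le C_{b,t,s}\sum_{|\boldsymbol k|>K(n)}\|f_{\boldsymbol k}\|^2 \to 0$, which is exactly $\var(\hat\mu) = o(1/n)$.

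The main obstacle is establishing the gain-coefficient representation with its two properties (uniform boundedness and decay) at the level of generality needed — i.e. without assuming bounded variation or continuity, only $f \in L^2$. The uniform bound is the heart of it and relies on a careful combinatorial accounting of how scrambled digits permute the points inside and across $b$-adic boxes; the fact that $\Gamma_{\boldsymbol k}$ eventually vanishes for fixed $\boldsymbol k$ uses the equidistribution guaranteed by Proposition~\ref{prop_preserve}. Once those are in hand, the $L^2$-tail / truncation step is routine. I expect the cleanest route is to cite or adapt Owen's scrambled-net variance decomposition directly rather than rederive it, and then spend the real work on the $\epsilon$-$\delta$ bookkeeping that converts ``each $\Gamma_{\boldsymbol k}$ bounded and eventually zero'' plus ``$\ell^2$ summability of $\|f_{\boldsymbol k}\|$'' into the stated $o(1/n)$.
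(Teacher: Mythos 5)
Your proposal reconstructs essentially the same argument that lies behind the paper's one-line proof (a citation to Owen's scrambled-net variance results): the multiresolution/gain-coefficient decomposition $\var(\hat\mu)=\frac1n\sum \Gamma_{u,\kappa}\sigma^2_{u,\kappa}$, the uniform bound $\Gamma_{u,\kappa}\le b^t\bigl(\frac{b+1}{b-1}\bigr)^{s-1}$, the vanishing of $\Gamma_{u,\kappa}$ at coarse resolutions once $m-t\ge|u|+|\kappa|$, and the $L^2$ tail truncation, which is exactly the machinery the paper itself redevelops in Section~\ref{sec:mrotos}. One small correction: for fixed $n$ the gain coefficients do not tend to $0$ as $|\kappa|\to\infty$ (they tend to $1$), but your truncation argument never actually uses that claim---only the uniform bound plus exact integration of the coarse terms---so the proof is sound as executed.
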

\begin{proof}
This follows from \cite{snxs}. The case $t=0$
is in \cite{owensinum}. 
\end{proof}

The factor $\log(n)^{s-1}$ is not necessarily small compared to $n^3$
for reasonable sizes of $n$ and large $s$. 
Informally speaking those powers cannot take effect
for scrambled nets until after they are too small to make
the result much worse than plain Monte Carlo:

\begin{theorem}\label{thm:finitebounds}
Let $\bsx_1,\dots,\bsx_n$ be a nested uniform scramble of 
a $(t,m,s)$-net in base $b$. Let $f\in L^2([0,1]^s)$ with 
$\var( f(\bsx))=\sigma^2$ when $\bsx\sim\dustd[0,1]^s$. 
Then for $\hat\mu$ given by~\eqref{eq:snetquad}, 
$$
\var(\hat \mu) \le b^t\Bigl( \frac{b+1}{b-1}\Bigr)^{s-1} \frac{\sigma^2}n. 
$$
If $t=0$, then 
$\var(\hat \mu) \le e\sigma^2/n \doteq 2.718\sigma^2/n$. 
\end{theorem}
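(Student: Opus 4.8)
The plan is to reduce the bound to a uniform estimate on the \emph{gain coefficients} of the scrambled net, following the multiresolution analysis of scrambled nets from \cite{owensinum,snxs}. First I would expand $f$ in a base-$b$ Haar-type multiresolution basis adapted to $b$-adic boxes, writing $f = \mu + \sum_{\emptyset\ne u\subseteq\{1,\dots,s\}}\sum_{\bsk}\sum_{\bsc} f_{u,\bsk,\bsc}$, where for nonempty $u$ the vector $\bsk=(k_j)_{j\in u}$ records a resolution level $k_j\ge1$ in each active coordinate, $\bsc$ labels the relevant $b$-adic cell, and each $f_{u,\bsk,\bsc}$ has mean zero, is piecewise constant on the $b$-adic boxes refining cell $\bsc$ by one more level in every active coordinate, and depends on no coordinate outside $u$. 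These pieces are mutually orthogonal in $L^2([0,1]^s)$, so that $\sigma^2=\sum_{u,\bsk,\bsc}\|f_{u,\bsk,\bsc}\|_2^2$.

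Next I would establish the variance identity $\var(\hat\mu)=\tfrac1n\sum_{u,\bsk,\bsc}\Gamma_{u,\bsk,\bsc}\,\|f_{u,\bsk,\bsc}\|_2^2$, in which the nonnegative gain coefficients $\Gamma_{u,\bsk,\bsc}$ depend only on the combinatorics of how $\bsx_1,\dots,\bsx_n$ fall into $b$-adic boxes, and not on $f$. Unbiasedness comes from Proposition~\ref{prop_unif}; the vanishing of every cross term $\e[f_{u,\bsk,\bsc}(\bsx_i)\,f_{u',\bsk',\bsc'}(\bsx_{i'})]$ with $(u,\bsk,\bsc)\ne(u',\bsk',\bsc')$, and the reduction of each surviving term to a quantity governed purely by which digits $\bsx_i$ and $\bsx_{i'}$ share, follow from the nested independence of the permutations in a nested uniform scramble. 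Ordinary Monte Carlo is the case $\Gamma_{u,\bsk,\bsc}\equiv1$, which recovers $\var(\hat\mu)=\sigma^2/n$.

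The crux is then to bound the gains for a scrambled $(t,m,s)$-net, using Proposition~\ref{prop_preserve} (scrambling preserves the net property). If the $b$-adic box on which $f_{u,\bsk,\bsc}$ is piecewise constant has volume at least $b^{t-m}$, it is a disjoint union of $b$-adic boxes of volume exactly $b^{t-m}$, each holding exactly $b^t$ of the $\bsx_i$; hence the net estimate of that piece equals its mean, namely zero, and $\Gamma_{u,\bsk,\bsc}=0$. For the remaining, finer shapes I would run a counting argument bounding how clustered the $n=b^m$ net points can be inside a $b$-adic box given that all coarser $b$-adic boxes are perfectly balanced; carried out coordinate by coordinate this gives $\Gamma_{u,\bsk,\bsc}\le b^{t}\bigl(\tfrac{b+1}{b-1}\bigr)^{|u|-1}\le b^{t}\bigl(\tfrac{b+1}{b-1}\bigr)^{s-1}$. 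Substituting into the variance identity and using $\sum_{u,\bsk,\bsc}\|f_{u,\bsk,\bsc}\|_2^2=\sigma^2$ proves the first bound. When $t=0$ only shapes whose defining box has volume below $b^{-m}$ survive, and running the same counting on exactly those shapes sharpens the per-piece bound to $\bigl(1+\tfrac1{b-1}\bigr)^{b-1}$, which equals $2$ at $b=2$ and increases to $e$ as $b\to\infty$, hence is always at most $e$; this gives $\var(\hat\mu)\le e\sigma^2/n$.

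I expect the combinatorial clustering estimate for the finer shapes — establishing the uniform factor $b^{t}(\tfrac{b+1}{b-1})^{s-1}$, and especially extracting the dimension-free constant $e$ in the $t=0$ case — to be the main obstacle. The multiresolution bookkeeping and the variance identity are routine once the basis and the scramble are set up, and the final step is merely summing orthogonal contributions against a uniform bound.
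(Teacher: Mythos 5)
The paper itself does not prove this theorem: its ``proof'' is a citation of \cite{snxs} for the first bound and \cite{owensinum} for the $t=0$ case, and your outline is essentially the argument of those references --- the same multiresolution / gain-coefficient machinery that the paper redevelops in Section~\ref{sec:mrotos} for geometric nets. Your bookkeeping is sound: orthogonality of the Haar-type pieces, unbiasedness via Proposition~\ref{prop_unif}, the variance identity with nonnegative gains, the Monte Carlo case $\Gamma\equiv1$, and the vanishing of the gains for shapes whose defining cells have volume at least $b^{t-m}$ (using Proposition~\ref{prop_preserve}).

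There are, however, two gaps. First, the central counting estimate $\Gamma_{u,\kappa}\le b^t\bigl(\frac{b+1}{b-1}\bigr)^{|u|-1}$ is only asserted (``I would run a counting argument''); that estimate is the entire content of the theorem and is not routine --- note that the gain bound the paper itself quotes from \cite{snxs} is the weaker $b^t\bigl(\frac{b+1}{b-1}\bigr)^{s}$, so the exponent $s-1$ in the final bound requires genuine work. Second, and more seriously, the $t=0$ step is wrong as described: no counting over the surviving fine shapes can produce a per-piece bound $\bigl(1+\frac{1}{b-1}\bigr)^{b-1}$ that is independent of the dimension. The sharpened per-gain bound for $(0,m,s)$-nets is of the form $\bigl(\frac{b}{b-1}\bigr)^{|u|-1}$, which still depends on $|u|\le s$; for fixed $b$ this grows without bound in $s$. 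The dimension-free constant $e$ emerges only after invoking the arithmetic link between $s$ and $b$ forced by the existence of a $(0,m,s)$-net (the theorem of \cite{owensinum} assumes $b\ge\max(s,2)$, whence $\bigl(\frac{b}{b-1}\bigr)^{s-1}\le\bigl(1+\frac{1}{b-1}\bigr)^{b-1}\le e$). Your sketch never uses any relation between $s$ and $b$, so the claimed dimension-free sharpening cannot follow from the counting alone; you need to add that constraint (and restrict to the regime where it holds) to recover the $e\sigma^2/n$ bound.
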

\begin{proof}
The first result is in \cite{snxs}, the second is in \cite{owensinum}. 
\end{proof}

\section{Splits and geometric van der Corput sequences}\label{sec:splits}

The van der Corput sequence is constructed as follows.
We begin with an integer $i\ge0$.
We write it as
$i = \sum_{k=1}^{\infty}a_k(i)b^{k-1}$
for digits $a_k(i)\in\ints_b$.
Define the radical inverse function
$\phi_b(i) = \sum_{k=1}^\infty a_k(i)b^{-k}\in[0,1]$.
The van der Corput sequence in base $b$
is $x_i = \phi_b(i-1)$ for $i\ge1$.
It is a $(0,1)$-sequence in base $b$.
The original sequence of
\cite{vand:1935:I,vand:1935:II}
was for base $b=2$.  Any $n$ consecutive van der Corput
points have a discrepancy $O(\log(n)/n)$ where the implied
constant can depend on $b$.

The lowest order base $b$ digit of $i$ determines which
of $b$ subintervals $[a/b,(a+1)/b)$ will contain $x_i$.
The second digit places $x_i$ into one of $b$ sub-subintervals
of the subinterval that the first digit placed it in, and so on.
\cite{basu:owen:2014} used a base $4$ recursive partitioning
of the triangle to generate a triangular van der Corput sequence.
Discrepancy in the triangle is measured through equidistribution
over trapezoidal subsets \citep{bran:colz:giga:trav:2013}.
Triangular van der Corput points have trapezoidal discrepancy 
of $O(n^{-1/2})$.

\subsection{Splits and recursive splittings}

We begin with a notion of splitting sets.
Splits are like partitions, except that we don't
require empty intersections among their parts.

\begin{definition}
Let $\cx\subset\real^d$ have finite and positive volume. 
A $b$-fold split of $\cx$ is a collection of Borel sets $\cx_a$ for $a\in\ints_b$
with $\cx =\cup_{a=0}^{b-1}\cx_a$, $\vol(\cx_a)=\vol(\cx)/b$ for 
$a\in\ints_b$, and $\vol(\cx_a\cap\cx_{a'})=0$ for 
$0\le a<a'<b$. 
\end{definition}


In all cases of interest to us, any overlap between $\cx_a$
and $\cx_{a'}$ for $a\ne a'$ takes place on the boundaries
of those sets. 
The unit interval $[0,1)$ is customarily partitioned 
into subintervals $[a/b,(a+1)/b)$ in QMC.  Handling $\cx=[0,1]$
requires awkward exceptions where the rightmost interval
is closed and all others are half-open.
For general closed sets $\cx$ it could be burdensome
to keep track of which subsets had which parts of their boundaries.
Using splits allows one for example to divide a closed triangle
into four congruent closed triangles.  Of course a partition
is also a valid split.
Our preferred approach uses a randomization 
under which there is probability zero of any sample point appearing 
on a split boundary.


\begin{figure}
\includegraphics[width=\hsize]{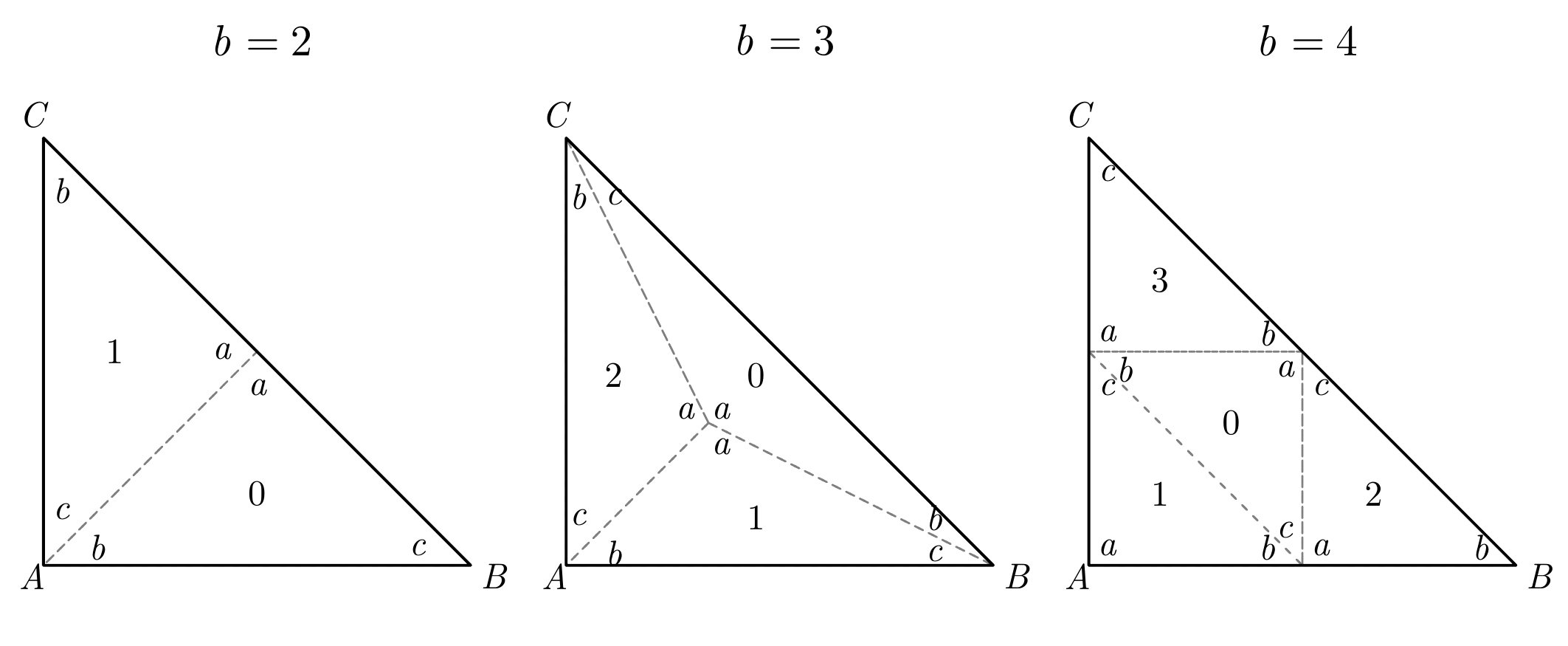}
\caption{\label{fig:base234}
Splits of a triangle $\cx$ for bases $b=2$, $3$ and $4$. 
The subtriangles $\cx_j$ are labeled by the digit 
$j\in \ints_b$.
}
\end{figure}

Figure~\ref{fig:base234} shows a triangle $\cx=\mathrm{ABC}$ split into
subtriangles. The left panel has $b=2$ subtriangles,
the middle panel has $b=3$ and the right panel has $b=4$.
For $b=2$, the vertex labeled `A' is connected to the midpoint
of the opposite side.  The subset ABC$_0$ is the one containing
`B'.  In each case, the new `A' is the mean of the old `B' and `C'.
The new `B' is to the right as one looks from the new A towards
the center of the triangle, and the new `C' is on the left.
An algebraic description is more precise:
Using lower case $abc$ to describe the new $ABC$,
the case described above (base $2$ and digit $0$) has
$$
\begin{pmatrix}
a\\
b\\
c
\end{pmatrix} =
\begin{pmatrix}
0 & 1/2 & 1/2 \\
1 & 0 & 0 \\
0 & 1 & 0
\end{pmatrix}
\begin{pmatrix}
A\\
B\\
C
\end{pmatrix}.
$$
Similar rules apply to the other bases.  From such rules
we may obtain the vertices
of a split at level $k$ by multiplying the original vertices by a sequence
of $k$ $3\times3$ matrices operating on points in the plane.

\begin{definition}
Let $\cx\subset\real^d$ have finite and positive volume. 
A recursive $b$-fold split of $\cx$ is a collection $\bx$ of
sets consisting of $\cx$ and exactly one
$b$-fold split of every set in the collection. The members of $\bx$
are called cells.
\end{definition}

The original set $\cx$ is said to be at level $0$ of the recursive split.
The cells $\cx_{0},\dots,\cx_{b-1}$ of $\cx$ are at level $1$.
A member of a recursive split of $\cx$ is at level $k\ge1$
if it arises after $k$ splits of $\cx$.
The cell $\cx_{a_1,a_2}$ is the subset of $\cx_{a_1}$ corresponding
to $a=a_2$ and similarly, an arbitrary cell at level $k\ge2$
is written $\cx_{a_1,a_2,\dots,a_k}$ for  $a_j\in\ints_b$.

We will need to enumerate all of the cells in a split $\bx$.
For this we write $t=\sum_{j=1}^ka_jb^{j-1}\in\ints_{b^k}$
and then take $\cx_{(k,t)}=\cx_{a_1,a_2,\dots,a_k}$
The cells in the split are now $\cx_{(k,t)}$ for $k\in\natu$
and $t\in\ints_{b^k}$, with $\cx_{(0,0)}=\cx$.

\begin{figure}
\includegraphics[width=\hsize]{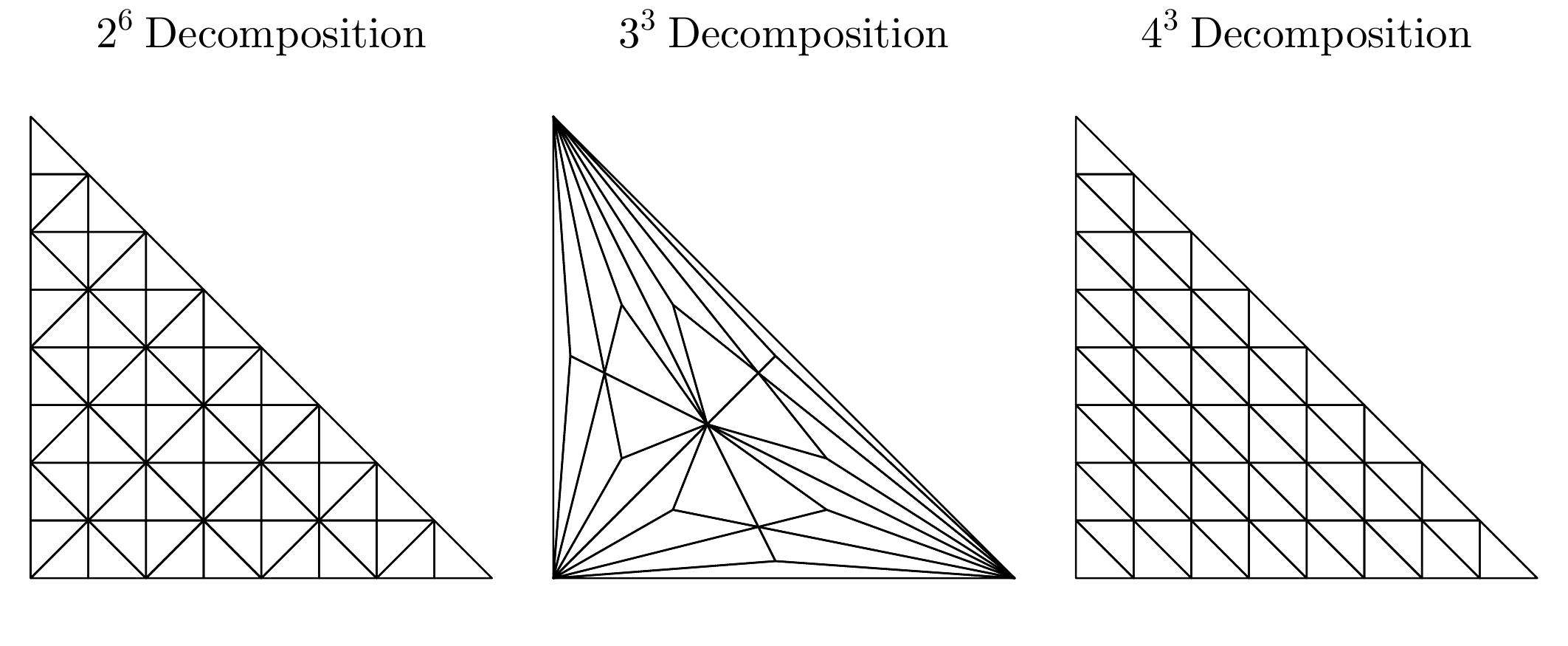}
\caption{\label{fig:base234nesteddecomp} 
The base $b$ splits from Figure~\ref{fig:base234} 
carried out to $k=6$ or $3$ or $4$ levels.  
}
\end{figure}

Figure~\ref{fig:base234nesteddecomp} shows the first
few levels of recursive splits
for each of the splits from Figure~\ref{fig:base234}.
The base $3$ version has elements that become arbitrarily
elongated as $k$ increases.  That is not desirable and our
best results do not apply to such splits.
The base $4$ version was used by \cite{basu:owen:2014}
to define a triangular van der Corput sequence.
The base $2$ version at $6$ levels has a superficially similar
appearance to the base $4$ version at $3$ levels.  But only the
latter has subtriangles similar to~$\cx$.  A linear transformation
to make the parent ABC an equilateral triangle yields congruent 
equilateral subtriangles for $b=4$, while for $b=2$ one gets 
isoceles triangles of two different shapes, some of which are
elongated.

\subsection{Splitting the disk and spherical triangle}
The triangular sets were split in the same way at each level.
Splits can be more general than that, as we illustrate by
splitting a disk.
A convenient way to define a subset
of a disk is in polar coordinates via upper and lower limits
on the radius and an interval of angles (which could
wrap around $2\pi\equiv 0$).
If one alternately splits on angles and radii, the result
is a decomposition of the disk into cells of which
some have very bad aspect ratios, especially those near the center.
\cite{beck:beck:2012} define the aspect ratio of a cell
as the ratio of the length of a circular arc through its
centroid to the length of a radial line through that centroid.
They show decompositions of the disk into $b$ cells
with aspect ratios near one for $b$ as large as several hundred. 
But their decompositions are not recursive.
Figure~\ref{fig:aspectdisks}
shows eight levels in a recursive binary split of the disk into cells.
A cell with aspect ratio larger than one is split along the radial
line through its centroid.  Other cells are split into equal areas
by an arc through the centroid.

\begin{figure}
\includegraphics[width=\hsize]{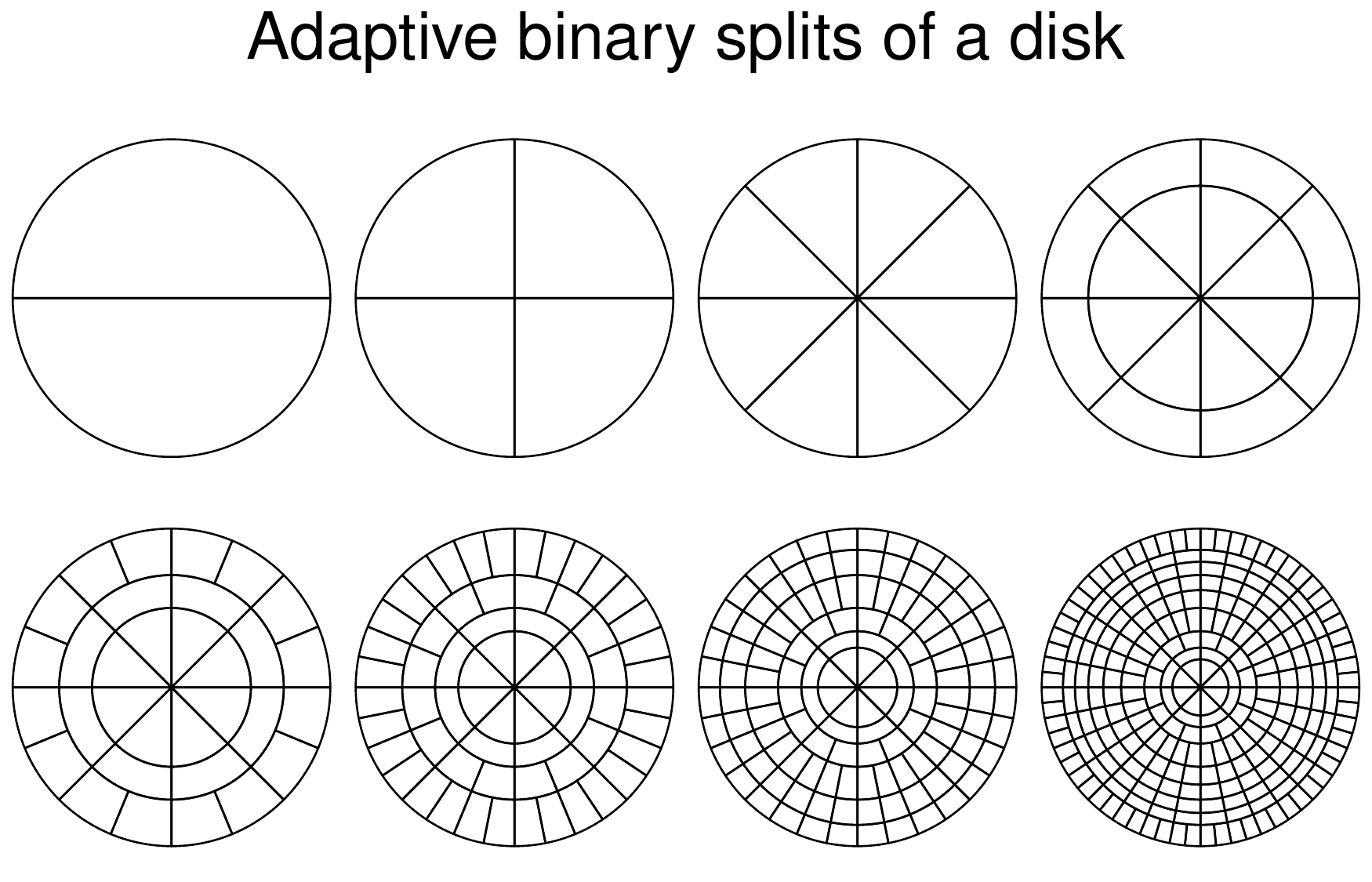}
\caption{\label{fig:aspectdisks}
A recursive binary equal area splitting of the unit disk,
keeping the aspect ratio close to unity.
}
\end{figure}

A spherical triangle is a subset of the sphere in $\real^3$
bounded by $3$ great circles.  By convention, one only
considers spherical triangles for which all internal angles
are less than $\pi$ radians.
The spherical triangle can be split into $b=4$ cells
like the rightmost panel in Figure~\ref{fig:base234}.
If one does so naively, via great circles through midpoints
of the sides of the original triangle, the four cells need not
have equal areas.
\cite{song2002developing} present a four-fold equal area recursive
splitting for spherical triangles, but their inner boundary
arcs are in general small circles.  If one wants a recursive
splitting of great circles into great circles, then it can be
done by generalizing the $b=2$ construction of the 
leftmost panel in Figure~\ref{fig:base234}.
A great circle with vertices ABC can be split into
two by finding a point $P$ on BC so that APB has half
the area of ABC, using the first step of Arvo's algorithm
\cite{arvo:1995}.

\subsection{Geometric van der Corput sequences}

Given a set $\cx$ and a recursive splitting of it in base $b$
we can construct a geometric van der Corput sequence for $\cx$.
The integer $i$ is written in base $b$ as
$i=\sum_{k=1}^\infty a_k(i)b^{k-1}$.
To this $i$ we define a sequence of sets
$$\cx_{i:K} = \cx_{a_1(i),a_2(i),\dots,a_K(i)}.$$
Then $\bsx_i$ is any point in
$\cap_{K=1}^\infty \cx_{i:K}$.
The volume of $\cx_{i:K}$ is $b^{-K}$ which converges
to $0$ as $K\to\infty$.  For most of the constructions
we are interested in, each $\bsx_i$ is a uniquely determined point.  
For decompositions with bad aspect ratios, like the base $3$ decomposition in Figure~\ref{fig:base234nesteddecomp},
some of the set sequences converge to a line segment.
For instance if $i\in\{0,1,2\}$, then $a_k(i)=0$ for $k\ge1$
and that infinite tail of zeros leads to a point $\bsx_i$ on
one of the edges of the triangle.

To get a unique limit $\bsx_i$,
we  use the notion of a sequence of sets converging nicely
to a point. Here is the version from \cite{stro:1994}.
\begin{definition}
The sequence $\cs_k\in\real^d$ of Borel sets for $k\in\natu$ 
converges nicely to $\bsx\in\real^d$ as $k\to\infty$ if
there exists $\alpha<\infty$ and $d$-dimensional cubes $\cc_k$
such that $\bsx\in\cc_k$, $\cs_k\subseteq\cc_k$,
$0<\vol(\cc_k)\le\alpha\vol(\cs_k)$, and
$\lim_{k\to\infty}\diam(\cs_k)=0$.
\end{definition}
A sequence of sets that converges nicely to $\bsx$
cannot also converge nicely to any $\bsx'\ne\bsx$.
We generally assume the following condition.
\begin{definition}\label{def:nice}
A recursive split $\bx$ in base $b$ is convergent if
for every infinite sequence $a_1,a_2,a_3,\dots\in\ints_b$,
the cells $\cx_{a_1,a_2,\dots a_K}$ converges nicely to
a point as $K\to\infty$. That point is denoted
$\lim_{K\to\infty}\cx_{a_1,a_2,\dots,a_K}$.
\end{definition}

In a geometric van der Corput sequence, we take a convergent 
recursive split and choose 
$$\bsx_i=\lim_{M\to\infty}\cx_{a_1(i-1),a_2(i-1),\dots,a_K(i-1),\underbrace{0,0,\dots,0}_M}$$
where $K$ is the last nonzero digit in the expansion of $i-1$ and there are $M\ge1$ zeros above.
For the base $4$ triangular splits, $\bsx_i$ is simply the center point of 
$\cx_{a_1(i-1),a_2(i-1),\dots,a_K(i-1)}$. For $b=2$, $\bsx_i$ is an interior 
but noncentral point.  The recursive split for $b=3$ is not convergent. 

\begin{definition}
Let $\bx$ be a recursive split of $\cx\in\real^d$ in base $b$.
Then $\bx$ satisfies the {\sl sphericity condition} if 
there exists $C<\infty$ such that
$\diam(\cx_{a_1,\dots,a_k})\le Cb^{-k/d}$ holds for all cells
$\cx_{a_1,\dots,a_k}$ in $\bx$.
\end{definition}

A recursive split that satisfies the sphericity condition is necessarily
convergent. The constant $C$ can be as low as $1$ when $d=1$
and the cells are intervals.  The smallest possible $C$ is usually
greater than $1$. We will assume without loss of generality that
$1\le C<\infty$.

\begin{definition}
Given a set $\cx\subset\real^d$ and a convergent recursive split $\bx$
of $\cx$ in base $b$, the $\bx$-transformation of $[0,1)$ is the function
$\phi = \phi_\bx:[0,1)\to\cx$ given by
$\phi(x) = \lim_{K\to\infty} \cx_{x_1,x_2,\dots,x_K}$
where $x$ has the base $b$ representation $0.x_1x_2\dots$.
If $x$ has two representations, then the one with trailing $0$s is used.
\end{definition}

\section{Geometric nets and scrambled geometric nets}\label{sec:nets}

Let $\cx$ be a bounded subset of $\real^d$
with finite nonzero volume.
Here we define digital geometric nets in $\cx^s$ via splittings.
It is convenient at this point to generalize to an $s$-fold
Cartesian product of potentially different spaces, even though they
may  all be copies of the same $\cx$.

For $s\in\natu$, we represent the set $\{1,2,\dots,s\}$ by $1{:}s$. 
For $j\in1{:}s$ we have bounded sets $\cx^{(j)}\subset\real^{d_j}$ with 
$\vol(\cx^{(j)})=1$. 
For sets of indices $u\subseteq1{:}s$, the complement $1{:}s-u$ is denoted by $-u$. 
We use $|u|$ for the cardinality of $u$.
The Cartesian product of $\cx^{(j)}$ for $j\in u$ is denoted $\cx^u$. 
A vector $\bsx\in \cx^{1{:}s}$ has components $\bsx_j\in\cx^{(j)}$. 
The vector in $\cx^u$ with components $\bsx_j$ for $j\in u$ is 
denoted $\bsx_u$. 

A point in $\cx^\otos$ has $\sum_{j=1}^sd_j$ components. 
We write it as 
$\bsx=(\bsx_1,\bsx_2,\dots,\bsx_s)$.  The components in this 
vector of vectors are those of the $\bsx_j$ 
concatenated. 
The notation $\bsx_j$ for $d_j$ consecutive components of $\bsx$
is the same as we use for the $j$'th point in a quadrature rule. The usages 
are different enough that context will make it clear which is intended.

\begin{definition}
For $j=1,\dots,s$, let $\bx_j$ be a recursive split of $\cx^{(j)}$ in a common base $b$.
Denote the cells of $\bx_j$ by $\cx_{j,(k,t)}$ for $k\in\natu$ and $t\in\ints_{b^k}$.
Then a $b$-adic cell for these splits is a Cartesian product of the form
$\prod_{j=1}^s\cx_{j,(k_j,t_j)}$
for integers $k_j\ge0$ and $t_j\in\ints_{b^{k_j}}$.
\end{definition}

\begin{definition}
Let $\cx^{(j)}\subset\real^{d_j}$ have volume $1$ for $j\in\otos$
and let $\bx_j$ be a recursive split of $\cx^{(j)}$ in a common base $b$.
For integers $m\ge t\ge0$,
the points $\bsx_1,\dots,\bsx_{b^m}\in\cx^\otos$ 
are a geometric $(t,m,s)$-net in base $b$
if every $b$-adic cell of volume $b^{t-m}$
contains precisely $b^t$ of the $\bsx_i$. 
They are a weak geometric $(t,m,s)$-net in base $b$
if every $b$-adic cell of volume $b^{t-m}$
contains at least $b^t$ of the $\bsx_i$. 
\end{definition}

Some of the $b$-adic cells can get more than $b^t$
points of a weak geometric $(t,m,s)$-net because the boundaries
of those cells are permitted to overlap. 

\begin{propo}
Let $\bsa_1,\dots,\bsa_n$ be a $(t,m,s)$-net in base $b$.
Let $\bsu_1,\dots,\bsu_n$ be a nested uniform scramble of
$\bsa_1,\dots,\bsa_n$. For $j\in\otos$, let $\bx_j$ be a recursive
base $b$ split of the unit volume set 
$\cx^{(j)}\subset\real^{d_j}$ with transformation $\phi_j$.
Then $\bsz_i = \phi(\bsa_i)$ (componentwise) is a weak geometric
$(t,m,s)$-net in base $b$ and
$\bsx_i = \phi(\bsu_i)$ (componentwise) is a geometric
$(t,m,s)$-net in base $b$ with probability one.
\end{propo}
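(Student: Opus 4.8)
The plan is to reduce everything to the combinatorial structure of $b$-adic boxes in $[0,1)^s$ and transport it through the componentwise transformation $\phi$. The key observation is that the $\bx_j$-transformation $\phi_j$ sends the $b$-adic interval $[c/b^k,(c+1)/b^k)$ into the cell $\cx_{j,(k,c)}$ (with $c$ read off from the first $k$ base-$b$ digits of the argument), so $\phi$ maps a $b$-adic box $\prod_{j=1}^s[c_j/b^{k_j},(c_j+1)/b^{k_j})$ into the $b$-adic cell $\prod_{j=1}^s\cx_{j,(k_j,c_j)}$. Since $\vol(\cx_{j,(k,c)})=b^{-k}=\vol([c/b^k,(c+1)/b^k))$, a $b$-adic box of volume $b^{t-m}$ maps into a $b$-adic cell of the same volume $b^{t-m}$. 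I would first record this as the main lemma, noting that the only subtlety is the treatment of points whose base-$b$ expansion is nonunique: by the convention (trailing zeros) these lie on shared cell boundaries, which is exactly why we can only get the \emph{weak} net property for the unscrambled image.

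Next I would handle $\bsz_i=\phi(\bsa_i)$. Fix a $b$-adic cell $\prod_j\cx_{j,(k_j,t_j)}$ of volume $b^{t-m}$ and let $E=\prod_j[t_j/b^{k_j},(t_j+1)/b^{k_j})$ be the corresponding $b$-adic box, also of volume $b^{t-m}$. By the lemma, $\bsa_i\in E\implies\bsz_i\in\prod_j\cx_{j,(k_j,t_j)}$, so the number of $\bsz_i$ in the cell is at least the number of $\bsa_i$ in $E$, which is exactly $b^t$ by the $(t,m,s)$-net property of $\bsa_1,\dots,\bsa_n$. Hence $\bsz_i$ is a weak geometric $(t,m,s)$-net. (One cannot claim equality because a point $\bsz_{i'}$ with $\bsa_{i'}$ outside $E$ could land on the boundary of the cell and be counted as inside it.)

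For $\bsx_i=\phi(\bsu_i)$ I would use the scrambling. By Proposition~\ref{prop_preserve}, $\bsu_1,\dots,\bsu_n$ is again a $(t,m,s)$-net in base $b$ with probability one, and by Proposition~\ref{prop_unif} each $\bsu_i\sim\dustd[0,1)^s$, so with probability one no $\bsu_i$ has a digit-expansion ending in a tail of $b-1$'s and no component of any $\bsu_i$ is a $b$-adic rational; consequently, on this probability-one event, $\bsu_i$ lies in the interior of a unique $b$-adic box at every resolution, and $\bsx_i=\phi(\bsu_i)$ lies in the interior of the corresponding cell, avoiding all cell boundaries. On that event, for each $b$-adic cell of volume $b^{t-m}$ the count of $\bsx_i$ inside it equals the count of $\bsu_i$ in the matching $b$-adic box, which is exactly $b^t$; thus $\bsx_i$ is a (strict) geometric $(t,m,s)$-net with probability one. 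The main obstacle is the boundary bookkeeping in the lemma: making precise that $\phi_j$ respects the $b$-adic nesting and that the only failure of injectivity/strictness occurs on a set that the scramble avoids almost surely; once that is nailed down, the counting arguments for both conclusions are immediate.
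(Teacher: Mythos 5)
Your treatment of the deterministic part is fine and is essentially the paper's argument: $\phi$ carries each $b$-adic box into the $b$-adic cell of the same volume, so every cell of volume $b^{t-m}$ receives at least the $b^t$ points $\bsa_i$ lying in the matching box, giving the weak geometric net property. The gap is in the scrambled part. You argue that, almost surely, no component of any $\bsu_i$ is a $b$-adic rational, and you then claim this forces $\bsx_i=\phi(\bsu_i)$ into the interior of its cell, away from all cell boundaries. That implication is false: $\phi_j$ can map points that are not $b$-adic rationals onto shared cell boundaries. For instance, in the base-$4$ triangle split, every point of the edge shared by the central subtriangle and a corner subtriangle is a limit of nested cells, hence of the form $\phi_j(u)$; uncountably many $u$ map onto that edge, while only countably many $u$ are $4$-adic rationals (concretely, a digit sequence whose nested cells shrink to the midpoint of an edge of the original triangle has no trailing zeros, yet its image lies on the common boundary of three level-one cells). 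So your probability-one event does not rule out boundary hits, and the exact count of $b^t$ points per cell does not follow from it.

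The missing ingredient, and the route the paper takes, is measure preservation of $\phi$ (Proposition~\ref{prop_trans}, equivalently Proposition~\ref{prop_unif_T}): each $\bsx_i$ is uniformly distributed on $\cx^\otos$, and for every $j$, $k$ and $t\ne t'$ the overlap $\cx_{j,(k,t)}\cap\cx_{j,(k,t')}$ has $d_j$-dimensional volume zero, so the countable union of all such overlaps is avoided by every $\bsx_i$ with probability one. Equivalently, the set of $u$ with $\phi_j(u)$ in some overlap is the preimage of a null set and hence has $\lambda_1$-measure zero, which the uniform $\bsu_i$ avoid almost surely. On that event each $\bsx_i$ lies in exactly one cell of each shape $(k_1,\dots,k_s)$, namely the one matching the $b$-adic box containing $\bsu_i$, and then your counting (each of the $b^{m-t}$ cells contains at least $b^t$ of the $n=b^m$ points, hence exactly $b^t$) closes the proof. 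Your use of Proposition~\ref{prop_preserve} is correct but not where the difficulty lies; the boundary-avoidance step must come from measure preservation, not from avoiding $b$-adic rationals.
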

\begin{proof}
In both cases the transformation applied to half open
intervals places enough points in each $b$-adic cell to make
those points a weak geometric $(t,m,s)$-net.  
The result for scrambled nets follows because each $\bsx_i$
is uniformly distributed and 
$$
\vol\Bigl(\, 
\cx_{j,(k,t)}
\bigcap
\cx_{j,(k,t')}
\Bigr)=0
$$
for all $j\in\otos$, $k\in\ints_{b^k}$ and $0\le t<t'<b^k$.
\end{proof}


\subsection{Measure preservation}

Let $\phi:[0,1) \rightarrow \cx\subset\real^d$ be the function that takes a point in
the unit interval and maps it to $\cx$ according to the convergent recursive split $\bx$. 
We show here that $\phi$ preserves the uniform distribution. 
This is the only section in which we need to distinguish Lebesgue measures
of different dimensions.  To that end,
we use $\lambda_1$ for Lebesgue measure in $\real$ 
and $\lambda_d$ for $\real^d$.

\begin{propo}
\label{prop_trans}
Let $\cx\subset\real^d$ with $\vol(\cx)=1$.
Let $\bx$ be a convergent recursive split of $\cx$ in base $b\ge2$.
Let $\phi$ be the $\bx$-transformation of $[0,1)$ 
and let $A\subseteq\cx$ be a Borel set.  
Then 
\[\lambda_1(\phi^{-1}(A)) = \lambda_d(A)
\]
where $\phi^{-1}(A) = \{ x \in [0,1) \mid \phi(x) \in A \}$.
\end{propo}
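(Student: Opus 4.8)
The plan is to identify the pushforward measure $\nu$ on the Borel subsets of $\cx$, defined by $\nu(A)=\lambda_1(\phi^{-1}(A))$, with $\lambda_d$ restricted to $\cx$. Both are probability measures: $\nu(\cx)=\lambda_1([0,1))=1$ since $\phi$ maps into $\cx$, and $\lambda_d(\cx)=\vol(\cx)=1$ by hypothesis. Since two finite Borel measures on the metric space $\real^d$ are equal as soon as they integrate every bounded Lipschitz function to the same value, the whole argument reduces to proving
\[
\Bigl|\,\int_0^1 f(\phi(x))\,dx \;-\; \int_\cx f\,d\lambda_d\,\Bigr| \;\le\; 2L\delta
\]
for each bounded $L$-Lipschitz $f:\real^d\to\real$ and arbitrarily small $\delta>0$; letting $\delta\to0$ then finishes it.

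First I would check that $\phi$ is Borel measurable, so that $\nu$ and $\int_0^1 f\circ\phi$ make sense. Picking any representative $\phi_k(x)\in\cx_{x_1,\dots,x_k}$, the map $\phi_k$ is a step function, constant on each $b$-adic interval $I_{k,t}=[tb^{-k},(t+1)b^{-k})$; the nice convergence of Definition~\ref{def:nice} supplies cubes $\cc_k$ with $\phi(x)\in\cc_k$, $\cx_{x_1,\dots,x_k}\subseteq\cc_k$, and $\diam\cc_k\to0$, whence $|\phi_k(x)-\phi(x)|\le\diam\cc_k\to0$ and $\phi$ is a pointwise limit of Borel maps. The same cubes show that for $x\in I_{k,t}$ one has $\phi(x)\in\overline{\cx_{(k,t)}}$: the cells $\cx_{x_1,\dots,x_K}$ with $K\ge k$ all lie in $\cx_{(k,t)}$ and converge to $\phi(x)$. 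Note also $\diam\overline{\cx_{(k,t)}}=\diam\cx_{(k,t)}$.

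The crucial point, and the step I expect to be the main obstacle, is to upgrade the branch-by-branch convergence hypothesis into a \emph{uniform} shrinking of cells: for every $\delta>0$ there is a level $k$ with $\diam\cx_{(k,t)}<\delta$ for all $t\in\ints_{b^k}$. I would prove this by a K\"onig's lemma argument. The set of finite strings $(a_1,\dots,a_k)$ with $\diam\cx_{a_1,\dots,a_k}\ge\delta$ is closed under passing to prefixes (the cells are nested, so diameters only decrease), hence it is a subtree of the $b$-ary tree with at most $b$ children at every node; if it had a node at every level it would, by K\"onig's lemma, contain an infinite branch $(a_1,a_2,\dots)$ with $\diam\cx_{a_1,\dots,a_k}\ge\delta$ for all $k$, contradicting $\diam\cc_k\to0$ along that branch. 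So the subtree is finite in depth, which gives the uniform bound.

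With uniform shrinking the estimate is routine. Fix $\delta>0$, take $k$ as above, and choose $y_t\in\overline{\cx_{(k,t)}}$ for each $t$. For $x\in I_{k,t}$ we have $\phi(x),y_t\in\overline{\cx_{(k,t)}}$, so $|\phi(x)-y_t|\le\diam\cx_{(k,t)}<\delta$ and $|f(\phi(x))-f(y_t)|\le L\delta$; integrating over $I_{k,t}$ and summing over the $b^k$ intervals gives $\bigl|\int_0^1 f\circ\phi-\sum_t b^{-k}f(y_t)\bigr|\le L\delta$. Likewise the cells $\cx_{(k,t)}$ cover $\cx$ with pairwise $\lambda_d$-null intersections, so $\int_\cx f\,d\lambda_d=\sum_t\int_{\cx_{(k,t)}}f\,d\lambda_d$, and each summand is within $b^{-k}L\delta$ of $b^{-k}f(y_t)$ because $\diam\cx_{(k,t)}<\delta$; hence $\bigl|\int_\cx f\,d\lambda_d-\sum_t b^{-k}f(y_t)\bigr|\le L\delta$. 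The triangle inequality gives the displayed bound, and since $\delta$ was arbitrary the two integrals agree, so $\nu=\lambda_d|_\cx$, which is the assertion.
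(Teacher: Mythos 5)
Your proof is correct, but it takes a genuinely different route from the paper's. The paper first verifies the identity on a single cell, whose preimage is a $b$-adic interval of the same measure, and then treats a general Borel $A$ by sandwiching it, inside and outside, between finite unions of same-level cells with $\epsilon$ excess. You instead identify the pushforward $\nu(A)=\lambda_1(\phi^{-1}(A))$ with $\lambda_d$ restricted to $\cx$ by testing against bounded Lipschitz functions; your key new ingredient is the K\"onig's lemma argument that upgrades branchwise nice convergence to uniform shrinking, $\max_t\diam(\cx_{(k,t)})\to0$ as $k\to\infty$, after which the comparison of $\int_0^1 f\circ\phi$ and $\int_\cx f\rd\lambda_d$ through the common sum $\sum_t b^{-k}f(y_t)$ is routine (your care with $\phi(x)$ lying only in the closure of $\cx_{(k,t)}$, and with measurability of $\phi$ via $\vol(\cc_k)\le\alpha b^{-k}$, is warranted and handled correctly). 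What your route buys: it avoids the paper's approximation step, which is asserted without proof and is delicate as literally stated --- a finite union of level-$k_1$ cells containing $A$ with excess below $\epsilon$ need not exist (take $A$ a dense open subset of $\cx$ of small measure: every cell meets $A$ in positive measure, so the union must be essentially all of $\cx$) --- and it also dispenses with the passing claim that $\phi$ is bijective; note that any repair of the paper's argument (e.g.\ showing that cells generate the Borel sets, or arguing via open sets and regularity) needs something like your uniform-shrinking lemma anyway. The cost is somewhat heavier machinery (measure-determining classes of test functions, pushforward measures), whereas the paper's sandwich argument, once the approximation step is properly justified, is more elementary and self-contained.
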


\begin{proof}
First, suppose that $A = \cx_{a_1,a_2,\dots,a_k}$ for $a_j\in\ints_b$. 
Then
$\phi^{-1}(A) = [t/b^k, (t+1)/b^k)$ for some $t \in \ints_{b^k}$,
and so
\[ \lambda_d(A) = \frac{1}{b^k} = 
\lambda_1\Bigl( \Bigl[\frac{t}{b^k}, \frac{t+1}{b^k}\Bigr)\Bigr) = 
\lambda_1(\phi^{-1}(A)).
\]

Now let  $A$ be any Borel subset of $\cx$.
Given $\epsilon > 0$, there exists a level $k_1\ge1$ 
and $n$ cells $B_i$ at level $k_1$ of $\bx$ 
such that $A \subseteq \bigcup_{i=1}^{n} B_{i}$ and 
$\lambda_d(\bigcup_{i=1}^{n} B_{i} \setminus A) < \epsilon$.
Similarly, there exists a level $k_2\ge1$  and $m$ cells $C_i$ at level
$k_2$ of $\bx$ such that $\bigcup_{i=1}^{m} C_{i} \subseteq A$ and 
$\lambda_d(A \setminus \bigcup_{i=1}^{m} C_{i}) < \epsilon$.
Thus we get,
\[
\begin{split}
\lambda_d(A) &= \lambda_d\biggl(\,\medcup_{i=1}^{n} B_{i}\biggr) - 
\lambda_d\biggl(\,\medcup_{i=1}^{n} B_{i} \setminus A\biggr) 
\geq  \lambda_d\biggl(\,\medcup_{i=1}^{n} B_{i}\biggr) - \epsilon \\
&= \sum_{i=1}^{n}\lambda_d(B_i) - \epsilon = \sum_{i=1}^{n}\lambda_1(\phi^{-1}(B_i)) - \epsilon = 
\lambda_1\biggl(\phi^{-1}\biggl(\, \medcup_{i=1}^n B_i\biggr)\biggr) - \epsilon \\
&\geq \lambda_1(\phi^{-1}\left(A)\right) - \epsilon,
\end{split}
\]
where the second equality follows because $\phi$ is bijective and
therefore $\phi^{-1}(A) \subseteq \phi^{-1} (\bigcup_{i=1}^n
B_i)$. 
Similarly, $\lambda_d(A)\le \lambda_1(\phi^{-1}(A))+\epsilon$.
Since $\epsilon$ was arbitrary we have the proof.
\end{proof}

Measure preservation extends to the multidimensional
case. The next proposition combines that with uniformity
under scrambling.

\begin{propo}
\label{prop_unif_T}
Let $\cx^{(j)}\subset\real^{d_j}$ with $\vol(\cx^{(j)})=1$ for $j\in\otos$
have convergent recursive splits $\bx_j$ in bases $b_j\ge 2$ with corresponding
transformations $\phi_j$.
Let $\bsa\in[0,1)^s$ and let $x_j$ be a base $b_j$ nested uniform scramble of $a_j$.
Then $\phi(\bsx)=(\phi_1(x_1),\dots,\phi_s(x_s))\sim\dustd(\cx^\otos)$.
\end{propo}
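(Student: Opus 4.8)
The plan is to deduce this from the two facts already in hand: Proposition~\ref{prop_unif}, which says a nested uniform scramble of a point in the cube is uniformly distributed there, and Proposition~\ref{prop_trans}, which says each $\bx_j$-transformation pushes the uniform law on $[0,1)$ forward to the uniform law on $\cx^{(j)}$. The only work is to splice these together across the $s$ coordinates.

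First I would establish that $\bsx=(x_1,\dots,x_s)\sim\dustd[0,1)^s$ with independent coordinates. Although Proposition~\ref{prop_unif} is stated for a common base, its argument treats the coordinates one at a time, so applying it to $a_j$ with base $b_j$ gives $x_j\sim\dustd[0,1)$; since the permutation systems used for distinct coordinates are mutually independent, the $x_j$ are independent, whence $\bsx\sim\dustd[0,1)^s$. In particular, with probability one no $x_j$ is a $b_j$-adic rational, so each has a unique base-$b_j$ expansion and $\phi_j(x_j)$ is unambiguously defined; the exceptional set is null and does not affect the distribution.

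Next, because $\phi$ acts componentwise on a vector with independent coordinates, the coordinates $\phi_j(x_j)$ of $\phi(\bsx)$ are independent, and by Proposition~\ref{prop_trans} applied to $\bx_j$ each satisfies $\P(\phi_j(x_j)\in A_j)=\lambda_1(\phi_j^{-1}(A_j))=\lambda_{d_j}(A_j)$ for every Borel $A_j\subseteq\cx^{(j)}$. Hence, for a measurable rectangle $A=\prod_{j=1}^sA_j\subseteq\cx^\otos$,
$$
\P\bigl(\phi(\bsx)\in A\bigr)=\prod_{j=1}^s\lambda_{d_j}(A_j)=\Bigl(\prod_{j=1}^s\lambda_{d_j}\Bigr)(A),
$$
and since $\vol(\cx^{(j)})=1$ for every $j$, the product measure $\prod_j\lambda_{d_j}$ restricted to $\cx^\otos$ is exactly $\dustd(\cx^\otos)$. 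The measurable rectangles form a $\pi$-system generating the product $\sigma$-algebra on $\cx^\otos$, so the probability measures $\P(\phi(\bsx)\in\cdot)$ and $\dustd(\cx^\otos)$, agreeing on that $\pi$-system, coincide on all Borel sets by the $\pi$--$\lambda$ theorem.

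I do not anticipate a real obstacle: the substantive content is carried entirely by Propositions~\ref{prop_unif} and~\ref{prop_trans}, and the only care needed is the bookkeeping that Proposition~\ref{prop_unif} survives the passage to coordinate-dependent bases and to independence across coordinates, together with the routine extension from rectangles to the product $\sigma$-algebra.
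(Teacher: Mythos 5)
Your proposal is correct and follows essentially the same route as the paper's own proof: apply Proposition~\ref{prop_unif} to get $\bsx\sim\dustd[0,1)^s$, then use the measure preservation of Proposition~\ref{prop_trans} componentwise to conclude the $\phi_j(x_j)$ are independent and uniform on the $\cx^{(j)}$, hence jointly uniform on $\cx^\otos$. The extra care you take about coordinate-dependent bases and the $\pi$--$\lambda$ extension simply spells out details the paper leaves implicit.
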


\begin{proof}
By Proposition \ref{prop_unif}, $\bsx$ is uniformly distributed on $[0,1)^s$. 
By Proposition~\ref{prop_trans}, $\phi$
preserves uniform measure.  Thus $\phi_j(x_j)$ are independent
$\dustd(\cx^{(j)})$ random elements.
\end{proof}

\subsection{Results in $L^2$ not requiring smoothness}

Some of the basic properties of scrambled nets
go through for geometric scrambled nets, without
requiring any smoothness of the integrand.
They don't even require that the same base be used
to define both the transformations and the digital net.

\begin{theorem}\label{thm:geomlittleo}
Let $\cx^{(j)}\subset\real^{d_j}$ with $\vol(\cx^{(j)})=1$ for $j=1,\dots,s$.
Let $\bx_j$ be a convergent recursive split of $\cx^{(j)}$ in base $b_j\ge2$
with transformation $\phi_j$.
Let $\bsu_1,\dots,\bsu_n$ be a nested uniform scramble of 
a $(t,m,s)$-net in base $b\ge2$ and let $\bsx_i=\phi(\bsu_i)$
componentwise.
Then for any  $f\in L^2(\cx^\otos)$,
$$
\var(\hat \mu) = o\Bigl( \frac1n\Bigr) 
$$
as $n\to\infty$. 
\end{theorem}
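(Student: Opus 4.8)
The plan is to pull the problem back to the unit cube through the measure-preserving transformation $\phi=(\phi_1,\dots,\phi_s)$ and then invoke Theorem~\ref{thm:littleo}. Set $g=f\circ\phi:[0,1)^s\to\real$. The first task is to check that $g\in L^2([0,1)^s)$. Each $\phi_j$ carries Borel subsets of $\cx^{(j)}$ to Lebesgue subsets of $[0,1)$ of the same measure (Proposition~\ref{prop_trans}), and the proof of Proposition~\ref{prop_unif_T} records that the product map $\phi$ therefore pushes $\dustd[0,1)^s$ forward to $\dustd(\cx^\otos)$. In particular $\phi$ is measurable, so $g$ is Lebesgue measurable; and the change-of-variables identity $\int_{[0,1)^s}h(\phi(\bsu))\,\rd\bsu=\int_{\cx^\otos}h(\bsx)\,\rd\bsx$ holds first for $h$ an indicator of a $b$-adic cell (Proposition~\ref{prop_trans}), then for indicators of arbitrary Borel sets by a standard approximation, then for nonnegative Borel $h$ by monotone convergence. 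Applying this to $h=f^2$ gives $\|g\|_{L^2([0,1)^s)}=\|f\|_{L^2(\cx^\otos)}<\infty$, so $g\in L^2([0,1)^s)$.

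Next I would observe that the reparametrization leaves the estimator unchanged: $\hat\mu=\frac1n\sum_{i=1}^n f(\bsx_i)=\frac1n\sum_{i=1}^n f(\phi(\bsu_i))=\frac1n\sum_{i=1}^n g(\bsu_i)$, and likewise $\mu=\int_{\cx^\otos}f(\bsx)\,\rd\bsx=\int_{[0,1)^s}g(\bsu)\,\rd\bsu$ by the same change-of-variables formula. Since $\bsu_1,\dots,\bsu_n$ is a nested uniform scramble of a $(t,m,s)$-net in base $b$, the quantity $\hat\mu$ is precisely the scrambled net quadrature estimate of $\int g$ appearing in~\eqref{eq:snetquad}. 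Note that the bases $b_j$ used to build the splits $\bx_j$ play no role in this reduction: they enter only through the definition of $g$, and $g$ lies in $L^2$ whatever their values, which is why the theorem can allow them to differ from $b$ and from each other.

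Finally I would apply Theorem~\ref{thm:littleo} to $g\in L^2([0,1)^s)$, which yields $\var(\hat\mu)=o(1/n)$ as $n\to\infty$, exactly the claimed conclusion.

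The only step with genuine content is the $L^2$ change-of-variables computation in the first paragraph; everything afterward is bookkeeping. That step is where I would be most careful, because $\phi$ is specified by a limit of shrinking cells rather than by an explicit formula, so one cannot differentiate it or write a Jacobian. However, Proposition~\ref{prop_trans} already packages precisely the measure-theoretic fact that is needed, so the argument requires no new estimates and no smoothness of $f$ whatsoever.
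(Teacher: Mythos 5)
Your argument is correct and is essentially the paper's own proof: the paper also reduces to $g=f\circ\phi\in L^2[0,1]^s$ via the measure preservation of $\phi$ and then invokes Theorem~\ref{thm:littleo}. You have simply spelled out the change-of-variables and bookkeeping steps that the paper leaves implicit.
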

\begin{proof}
Since $f\in L^2(\cx^\otos)$ we have $f\circ\phi\in L^2[0,1]^s$.
Then Theorem~\ref{thm:littleo} applies.
\end{proof}

\begin{theorem}\label{thm:geomfinitebounds}
Under the conditions of Theorem~\ref{thm:geomlittleo},
$$
\var(\hat \mu) \le b^t\Bigl( \frac{b+1}{b-1}\Bigr)^{s-1} \frac{\sigma^2}n,
$$
where $\sigma^2 = \var( f(\bsx))$ for $\bsx\sim\dustd(\cx^\otos)$.
If $t=0$, then 
$\var(\hat \mu) \le e\sigma^2/n \doteq 2.718\sigma^2/n$. 
\end{theorem}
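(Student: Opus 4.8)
The plan is to reduce the statement to the cube case exactly as in the proof of Theorem~\ref{thm:geomlittleo}, but now invoking Theorem~\ref{thm:finitebounds} in place of Theorem~\ref{thm:littleo}. First I would set $g = f\circ\phi$, where $\phi=(\phi_1,\dots,\phi_s)$ acts componentwise. Proposition~\ref{prop_trans}, combined with Tonelli's theorem to pass from single coordinates to the product, shows that $\phi:[0,1)^s\to\cx^\otos$ preserves Lebesgue measure. The change-of-variables identity then gives $\int_{[0,1]^s}|g(\bsu)|^2\rd\bsu=\int_{\cx^\otos}|f(\bsx)|^2\rd\bsx<\infty$, so $g\in L^2[0,1]^s$; likewise $\int_{[0,1]^s}g(\bsu)\rd\bsu=\mu$ and $\var(g(\bsu))=\var(f(\bsx))=\sigma^2$ when $\bsu\sim\dunif[0,1]^s$ and $\bsx\sim\dunif(\cx^\otos)$.

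Next I would observe that since $\bsx_i=\phi(\bsu_i)$ we have $f(\bsx_i)=g(\bsu_i)$, so $\hat\mu=\frac1n\sum_{i=1}^n g(\bsu_i)$ is literally a scrambled net quadrature rule of the form~\eqref{eq:snetquad} applied to the integrand $g$, using the nested uniform scramble $\bsu_1,\dots,\bsu_n$ of a $(t,m,s)$-net in base $b$. The bases $b_j$ used in the geometric transformations $\phi_j$ play no further role once we have passed to $g$ on the cube: all that matters is that the $\bsu_i$ form a scrambled $(t,m,s)$-net in base $b$ and that $g\in L^2$.

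Finally I would apply Theorem~\ref{thm:finitebounds} to $g$ directly. Its first conclusion yields $\var(\hat\mu)\le b^t\bigl(\tfrac{b+1}{b-1}\bigr)^{s-1}\sigma^2/n$, and its second conclusion yields $\var(\hat\mu)\le e\sigma^2/n$ when $t=0$, which is precisely the claimed bound.

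I do not expect a genuine obstacle here; the one point needing care is that the product map $\phi$ is measure preserving and that $g$ is a well-defined element of $L^2[0,1]^s$ with the stated integral identities. Both follow from Proposition~\ref{prop_trans} and Tonelli's theorem: $f$ is Borel, $\phi^{-1}$ sends Borel sets to Lebesgue-measurable sets by Proposition~\ref{prop_trans}, so $g$ is Lebesgue measurable, and its square integral is finite by the change of variables together with $f\in L^2(\cx^\otos)$.
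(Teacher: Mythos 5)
Your proposal is correct and follows essentially the same route as the paper: compose $f$ with the measure-preserving map $\phi$ (Proposition~\ref{prop_trans}) to get $f\circ\phi\in L^2[0,1]^s$ with the same mean and variance, then apply Theorem~\ref{thm:finitebounds} to the scrambled net on the cube. The paper's proof is just a terse version of this, so no further comment is needed.
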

\begin{proof}
Once again, $f\circ\phi\in L^2[0,1]^s$. Therefore
Theorem~\ref{thm:finitebounds} applies.
\end{proof}

\section{ANOVA and multiresolution for $\cx^\otos$}\label{sec:mrotos}

There is a well known analysis of variance (ANOVA) for $[0,1)^s$.  
Here we present the corresponding ANOVA for
$\cx^\otos$. Then we give a multiresolution of $L^2(\cx^\otos)$
adapting the base $b$ wavelet multiresolution in~\cite{owensinum}
for $[0,1)$.

\subsection{ANOVA of $\cx^\otos$}

For $f\in L^2(\cx^{1{:}s})$ the ANOVA decomposition provides
a term for each $u\subseteq1{:}s$. These are defined recursively via
\begin{align}\label{eq:anovau}
f_u(\bsx) = 
\int_{\cx^{-u}} 
\Bigl(f(\bsx) - \sum_{v\subsetneq u}f_v(\bsx)\Bigr)\rd \bsx_{-u}.
\end{align}
The function $f_u$ represents the `effect' of $\bsx_j$ for $j\in u$ above and beyond 
what can be explained by lower order effects of strict subsets $v\subsetneq u$.
While $f_u$ is a function defined on $\cx^{1{:}s}$ its value
only depends on $\bsx_u$. 
By convention $f_\emptyset(\bsx)=\int_{\cx^{1{:}s}} f(\bsx)\rd\bsx = \mu$ for all $\bsx$.
We define variances $\sigma^2_u =\int_{\cx^{1{:}s}}f_u(\bsx)^2\rd\bsx$ for $|u|>0$
and $\sigma^2_\emptyset=0$.
The ANOVA decomposition satisfies $\sum_{|u|>0}\sigma^2_u=\sigma^2$
where $\sigma^2=\int_{\cx^\otos} (f(\bsx)-\mu)^2\rd\bsx$.
It also satisfies $f(\bsx)=\sum_{u\subseteq1{:}s}f_u(\bsx)$
by the definition of $f_\otos$, wherein a $0$-fold integral
of a function leaves it unchanged.
\subsection{Multiresolution}

We begin with a version of base $b$ Haar
wavelets adapted to $\cx\subset\real^d$
using a recursive split $\bx$ of $\cx$ in base $b\ge2$.
Recall that the cells at level $k$ of a split are represented
by one of $\cx_{(k,t)}$ for $0\le t<b^k$. Those cells are
in turn split at level $k+1$ via
$$
\cx_{(k,t)} = \bigcup_{c=0}^{b-1} \cx_{(k,t,c)},
\quad \text{where}\quad \cx_{(k,t,c)} =\cx_{(k+1,bt+c)}.
$$

The multiresolution of $\cx$ in terms of $\bx$
has a function $\varphi(\bsx)=1$ for all $\bsx\in\cx$
as well as functions
\begin{equation}\label{eq:unibasis}
\begin{split}
\psi_{ktc} 
&=  b^{(k+1)/2}1_{\bsx\in \cx_{(k,t,c)}}
-b^{(k-1)/2}1_{\bsx\in \cx_{(k,t)}}\\
& \equiv
b^{(k-1)/2}\bigl( b N_{ktc}(\bsx)-W_{kt}(\bsx)\bigr),
\end{split}
\end{equation}
where $N_{ktc}$ and $W_{kt}$ are indicator functions of
the given narrow and wide cells respectively.
The scaling in~\eqref{eq:unibasis} makes the
norm of $\psi_{ktc}$ independent of $k$:
$\int \psi_{ktc}^2(\bsx)\rd\bsx=(b-1)/b$.

For $f_1,f_2\in L^2(\cx)$ define the inner product
$\langle f_1,f_2\rangle = \int_{\cx}f_1(\bsx) f_2(\bsx)\rd\bsx$.
Then let
$$
f_K(\bsx) 
= \langle f,\varphi\rangle\varphi(\bsx)
+\sum_{k=1}^K\sum_{t=0}^{b^k-1}\sum_{c=0}^{b-1}
\langle f,\psi_{ktc}\rangle \psi_{ktc}(\bsx).
$$

For $\bsx$ belonging to only one cell at level $K+1$,
as almost all $\bsx$ do,
$f_K(\bsx)$ is the average of $f$ over that cell.
By Lebesgue's differentiation theorem, local averages over  
sets that converge nicely to $\bsx$ satisfy  
$$
\lim_{K\to\infty} \frac{\int_{\cs_K} f(\bsx)\rd\bsx}{\vol(\cs_K)}
= f(\bsx),\quad\text{a.e.}
$$ 
for $f\in L^1(\real^d)$.  
So if $\bx$ is convergent, then $\lim_{K\to\infty}f_K(\bsx)=f(\bsx)$
almost everywhere. Thus we may use the representation
\begin{align}\label{eq:tightframe}
f(\bsx) = 
\langle f,\varphi\rangle\varphi(\bsx)
+\sum_{k=1}^\infty\sum_{t=0}^{b^k-1}\sum_{c=0}^{b-1}
\langle f,\psi_{ktc}\rangle \psi_{ktc}(\bsx).
\end{align}
Equation~\eqref{eq:tightframe} resembles a Fourier analysis
with basis functions $\varphi$ and $\psi_{ktc}$.  Unlike the Fourier
case, the functions
$\psi_{ktc}$ and $\psi_{ktc'}$ are not orthogonal. Indeed
$\sum_{c\in\ints_b}\psi_{ktc}=0$ a.e..
Non-orthogonal
bases that nonetheless obey~\eqref{eq:tightframe} are known
as tight frames.

We may extend~\eqref{eq:tightframe} to the multidimensional
setting by taking tensor products.
For $j\in1{:}s$, let $\cx^{(j)}\subset\real^d$ have
recursive split $\bx_j$ in base $b\ge2$. Let the basis
functions be $\varphi_j$ and $\psi_{j(ktc)}$ with narrow
and wide cell indicators $N_{jktc}$ and $W_{jkt}$.
For $u\subseteq1{:}s$, let $\kappa\in\natu^{|u|}$ have
elements $k_j\ge0$ for $j\in u$. 
Similarly let $\tau$ have elements $t_j\in\ints_{b^{k_j}}$ 
and  $\gamma$ have elements $c_j\in\ints_{b}$, both for $j\in u$.
Then for $\bsx\in\cx^{1{:}s}$ define
\begin{align}\label{eq:multiresfunction}
\psi_{u\kappa\tau\gamma}(\bsx)
= \prod_{j\in u}\psi_{jk_jt_jc_j}(\bsx_j)
\prod_{j\not\in u}\varphi_j(\bsx_j).
\end{align}
Our multiresolution of $L^2(\cx^{1{:}s})$ is
\begin{align*}
f(\bsx) &= \sum_{u\subseteq1{:}s}
\sum_{\kappa\mid u}
\sum_{\tau\mid u,\kappa}
\sum_{\gamma\mid u}
\langle\psi_{u\kappa\tau\gamma},f\rangle 
\psi_{u\kappa\tau\gamma}(\bsx)\\ 
 &= \mu + \sum_{|u|>0}
\sum_{\kappa\mid u}
\sum_{\tau\mid u,\kappa}
\sum_{\gamma\mid u}
\langle\psi_{u\kappa\tau\gamma},f\rangle 
\psi_{u\kappa\tau\gamma}(\bsx). 
\end{align*}
The sum over $\kappa$ is over all possible values of $\kappa$
given the subset $u$.  The other sums are similarly over their
entire ranges given the other named variables.

\subsection{Variance and gain coefficients}

Here we study the variance of averages over scrambled
geometric nets. We start with arbitrary points $\bsa_i\in[0,1)^s$.
For now, they need not be from a digital net.
They are given a nested uniform scramble, yielding points
$\bsu_i\in[0,1)^s$. Those points are then mapped to
$\bsx_i\in\cx^{1{:}s}$ using recursive splits in base $b$.

It follows from Proposition \ref{prop_unif_T} that 
\[
\begin{split}
\var(\hat{\mu}) &= \E\biggl( \frac{1}{n^2} \sum_{i=1}^n \sum_{i'=1}^n  
\sum_{|u| > 0}\sum_{\kappa\mid u} \sum_{\tau\mid u,\kappa} \sum_{\gamma\mid u}  
\sum_{|u'| > 0}\sum_{\kappa'\mid u'} \sum_{\tau'\mid u',\kappa'} \sum_{\gamma'\mid u'}  \\
& \quad\qquad\qquad \langle f, \psi_{u\kappa\tau\gamma} \rangle \langle f, \psi_{u'\kappa'\tau'\gamma'} \rangle \psi_{u\kappa\tau\gamma}(\bsx_i)  \psi_{u'\kappa'\tau'\gamma'}(\bsx_{i'})  \biggr).
\end{split}
\]
This formula simplifies due to properties of the randomization. Lemma 4 from \cite{owensinum} shows that if $u \neq u'$ or $\kappa \neq \kappa'$ or $\tau \neq \tau'$, then,
\begin{equation}\label{eq:orthog}
\E( \psi_{u\kappa\tau\gamma}(\bsx_i)  \psi_{u'\kappa'\tau'\gamma'}(\bsx_{i'}) ) = 0.
\end{equation}
Consequently
\begin{equation*}
\var(\hat{\mu}) = \sum_{|u| > 0} \sum_{\kappa\mid u} 
\var\biggl( \frac{1}{n} \sum_{i=1}^n \nu_{u\kappa} (\bsx_i) \biggr),
\end{equation*}
where
\begin{equation*}
\nu_{u\kappa}(\bsx) = \sum_{\tau\mid u,\kappa} \sum_{\gamma\mid u} \langle f, \psi_{u\kappa\tau\gamma} \rangle \psi_{u\kappa\tau\gamma}(\bsx)
\end{equation*}
with $\nu_{\emptyset,()} = \mu$. The function $\nu_{u\kappa}$ is constant within elementary regions of the form
\[\prod_{j \in u} \cx_{j,(k_j,t_j,c_j)}\prod_{j \not\in u} \cx^{(j)}\]
for $0 \leq t_j < b^{k_j }$ and $0\le c_j<b$.

Define
\[ \sigma_{u\kappa}^2 = \int_{\cx^\otos} \nu_{u\kappa}^2(x) \rd \bsx.\]
The multiresolution-based ANOVA decomposition is 
\begin{align}\label{eq:mranova}
\sigma^2 = \int_{\cx^{1{:}s}} (f(\bsx) - \mu)^2 \rd \bsx 
= \sum_{|u| > 0}\sum_{\kappa\mid u} \sigma_{u\kappa}^2
\end{align}
which follows from the orthogonality in~\eqref{eq:orthog}.

The equidistribution properties of $\bsa_1,\dots,\bsa_n$ 
determine the contribution of each $\nu_{u\kappa}$ to $\var(\hat{\mu})$. 
Write $\bsa_i=(a_{i1},\dots,a_{is})$ and define
\[ \Upsilon_{i,i',j,k} = \frac{1}{b-1} \left(b1_{\floor{b^{k+1}a_{ij}} = \floor{b^{k+1}a_{i'j}}}- 1_{\floor{b^{k}a_{ij}} = \floor{b^{k}a_{i'j}}} \right).
\]
For each $|u| > 0$ and $\kappa\in\natu^{|u|}$ define
\[\Gamma_{u,\kappa} = \frac{1}{n}  \sum_{i=1}^n \sum_{i'=1}^n \prod_{j \in u} \Upsilon_{i,i',j,k_j}.
\]
It follows from Theorem 2 of \cite{owensinum} that 
\[
\var(\hat{\mu}) = \frac{1}{n} \sum_{|u| >0} \sum_{\kappa\mid u} \Gamma_{u,\kappa} \sigma_{u\kappa}^2.
\]
We must have $\Gamma_{u,\kappa}\ge0$ because $\var(\hat \mu)\ge0$.
In plain Monte Carlo sampling, $\var(\hat \mu) = {\sigma^2}/n$ which
corresponds to all $\Gamma_{u,\kappa}=1$ (compare~\eqref{eq:mranova}).
The $\Gamma_{u,\kappa}$ are
called `gain coefficients' because they describe variance relative
to plain Monte Carlo.  If the points $\bsa_i$ are carefully
chosen, then many of those coefficients can be reduced and an improvement over
plain Monte Carlo can be obtained. 

If $\bsa_1,\dots,\bsa_n$ are a $(t,m,s)$-net in base $b$, we can put bounds
on the gain coefficients using lemmas from \cite{snxs}. 
In particular,
$\Gamma_{u,\kappa} = 0$ if $m - t\ge |u| + |\kappa|$,
and otherwise
\begin{equation*}
\Gamma_{u,\kappa} 
\leq b^t \left(\frac{b+1}{b-1}\right)^s.
\end{equation*}
Thus finally we have,
\begin{align}
\label{var_multi}
\var(\hat{\mu}) &
\leq \frac{b^t}{n}\left(\frac{b+1}{b-1}\right)^s  \sum_{|u| >0} \sum_{|\kappa| + |u| > m - t }  
\sigma_{u\kappa}^2.
\end{align}

Equation~\eqref{var_multi} shows that the scrambled net variance
depends on the rate at which $\sigma^2_{u\kappa}$ decay as 
$|\kappa|+|u|$ increases. For smooth functions on $[0,1)^s$
they decay rapidly enough to give $\var(\hat \mu) = O( \log(n)^{s-1}/n^3)$.
To get a variance rate on $\cx^{\otos}$ we study the effects of smoothness
on $\sigma^2_{u\kappa}$ for $d$-dimensional spaces $\cx$.

\section{Smoothness and Extension}\label{sec:extension}

Our main results for scrambling geometric nets require
some smoothness of the integrand. We also use some
extensions of the integrand and its ANOVA components to 
rectangular domains.

Let $f$ be a real-valued function on $\cx\subseteq\real^m$. 
The dimension $m$ will usually be $d\times s$, for an $s$-fold
tensor product of a $d$-dimensional region.
For $v\subseteq 1{:}m$, the mixed partial derivative of $f$
taken once with respect to $x_j$ for each $j\in v$ is denoted
$\partial^vf$. By convention $\partial^\emptyset f=f$, as differentiating
a function $0$ times leaves it unchanged.

\subsection{Sobol' extension}
We present the Sobol' extension through a series of definitions.
\begin{definition}
Let $\cx\subseteq\real^m$ for $m\in\natu$.
The function $f:\cx\to\real$ is said to be smooth if $\partial^{1{:}m} f$ is continuous on $\cx$.
\end{definition}

\begin{definition}
Let $\cx\subset\real^m$. The rectangular hull of $\cx$ is
the Cartesian product 
$$
\rect(\cx) = \prod_{j=1}^{m} \bigl[\inf\{x_j\mid \bsx\in\cx\},
\sup\{x_j\mid \bsx\in\cx\}\bigr],
$$
which we also call a bounding box.
For two points $\bsa,\bsb\in\real^m$
we write $\rect[\bsa,\bsb]$ as a shorthand for $\rect[ \{\bsa,\bsb\}]$.
\end{definition}

For later use, we note that
\begin{align}\label{eq:diamrect}
\diam( \rect(\cx))\le\sqrt{d}\times\diam(\cx),
\end{align}
for $\cx\subset\real^d$.

\begin{definition}
A closed set $\cx \subseteq \mathbb{R}^{m}$ with non-empty interior is said to be \textit{Sobol' extensible} if there exists a point $\bsc \in \cx$ such that $\bsz \in \cx$ implies $\rect[\bsc,\bsz] \subseteq \cx$. The point $\bsc$ is called the anchor. 
\end{definition}

Figure~\ref{fig:extensible} shows some Sobol' extensible regions. 
Figure~\ref{fig:non_sobol} shows some sets which are not Sobol' extensible,
because no anchor point exists for them.
Sets like the right panel of Figure~\ref{fig:non_sobol} are of interest in QMC for 
functions with integrable singularities along the diagonal. 

\begin{propo}
If $\cx^{(j)}_j\subset\real^{d_j}$ is Sobol' extensible with anchor $\bsc_j$
for $j=1,\dots,s$, then $\prod_{j=1}^s\cx^{(j)}$ is Sobol' extensible
with anchor $\bsc = (\bsc_1,\dots,\bsc_s)$.
\end{propo}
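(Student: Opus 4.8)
The plan is to verify the defining property of Sobol' extensibility for the product set directly from the definition, reducing everything to the one-dimensional-in-each-factor statement that we already have by hypothesis. First I would observe that $\prod_{j=1}^s \cx^{(j)}$ is a closed set with nonempty interior in $\real^{d_1+\cdots+d_s}$, since it is a finite Cartesian product of closed sets each having nonempty interior; the interior of the product contains the product of the interiors. Next I would fix the candidate anchor $\bsc = (\bsc_1,\dots,\bsc_s)$ and note that $\bsc \in \prod_j \cx^{(j)}$ because each $\bsc_j \in \cx^{(j)}_j$.

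The heart of the argument is the inclusion $\rect[\bsc,\bsz] \subseteq \prod_j \cx^{(j)}$ for arbitrary $\bsz = (\bsz_1,\dots,\bsz_s)$ in the product, with $\bsz_j \in \cx^{(j)}_j$. The key step is that the rectangular hull operator commutes with Cartesian products in the sense that $\rect[\bsc,\bsz] = \prod_{j=1}^s \rect[\bsc_j,\bsz_j]$; this follows immediately from the coordinatewise definition of $\rect$, since the infimum and supremum of the $j$-th group of coordinates over the two-point set $\{\bsc,\bsz\}$ are computed within the $j$-th factor and depend only on $\bsc_j$ and $\bsz_j$. Given that identity, Sobol' extensibility of $\cx^{(j)}_j$ with anchor $\bsc_j$ gives $\rect[\bsc_j,\bsz_j]\subseteq\cx^{(j)}_j$ for each $j$, and taking the product of these inclusions yields $\rect[\bsc,\bsz] = \prod_j \rect[\bsc_j,\bsz_j] \subseteq \prod_j \cx^{(j)}_j$, which is exactly what we need.

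I do not expect a serious obstacle here; the only point requiring any care is the bookkeeping of which coordinates of $\real^{d_1+\cdots+d_s}$ belong to which factor, so that the "coordinatewise" claim about $\rect$ is correctly interpreted as "block-coordinatewise" with blocks of sizes $d_1,\dots,d_s$. Making that indexing explicit — say, writing a generic point as $\bsx = (\bsx_1,\dots,\bsx_s)$ with $\bsx_j \in \real^{d_j}$ and noting that the $\ell$-th coordinate within block $j$ ranges over exactly the same values in $\{\bsc,\bsz\}$ as it does in $\{\bsc_j,\bsz_j\}$ — disposes of any ambiguity. The argument is otherwise a one-line consequence of the definition, so the write-up can be kept short.
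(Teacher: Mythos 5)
Your proposal is correct and follows essentially the same route as the paper's own proof: write $\bsx$ blockwise, observe that $\rect[\bsc,\bsx]$ is contained in (indeed equals) $\prod_{j=1}^s\rect[\bsc_j,\bsx_j]$, and apply the per-factor Sobol' extensibility. Your version is slightly more explicit about the block-coordinate bookkeeping and the closed/nonempty-interior requirements, but the core argument is identical.
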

\begin{proof}
Suppose that $\bsx \in\prod_{j=1}^s\cx^{(j)}$. 
We write $\bsx$ as $(\bsx_1,\dots,\bsx_s)$
where each $\bsx_j\in\cx^{(j)}$.
Then $\rect(\bsc,\bsx) \subset \prod_{j=1}^s\rect(\bsc_j,\bsx_j)\in\prod_{j=1}^s\cx^{(j)}$.
\end{proof}

Given points $\bsx,\bsy\in\real^m$ and a set $u\subseteq1{:}m$,
the hybrid point $\bsx_u{:}\bsy_{-u}$ is the point $\bsz\in\real^m$
with $z_j=x_j$ for $j\in u$ and $z_j=y_j$ for $j\not\in u$.
We will also require hybrid points $\bsx_u{:}\bsy_v{:}\bsz_w$
whose $j$'th component is that of $\bsx$ or $\bsy$ or $\bsz$
for $j$ in $u$ or $v$ or $w$ respectively, where those index
sets partition $1{:}m$.

A smooth function $f$ can be written as 
\begin{equation}\label{eq:mvfund}
f(\bsx) = \sum_{u \subseteq 1{:m}} \int_{[\bsc_u, \bsx_u]}\partial^uf(\bsc_{-u}{:}\bsy_u) \rd \bsy_u 
\end{equation}
where $\int_{[\bsc_u,\bsx_u]}$ denotes $\pm \int_{\rect[\bsc_u,\bsx_u]}$. The sign is negative if and only if $c_j > x_j$ holds for an odd number of indices $j \in u$.  The term for $u = \emptyset$ equals $f(\bsc)$ under a natural convention. 
Equation~\eqref{eq:mvfund} is a multivariable version of the fundamental
theorem of calculus.  For $m=1$ it simplifies to
$f(x) = f(c) + \int_c^x f'(y)\rd y$.

\begin{figure}[t]
\begin{center}
\includegraphics[scale = 0.7]{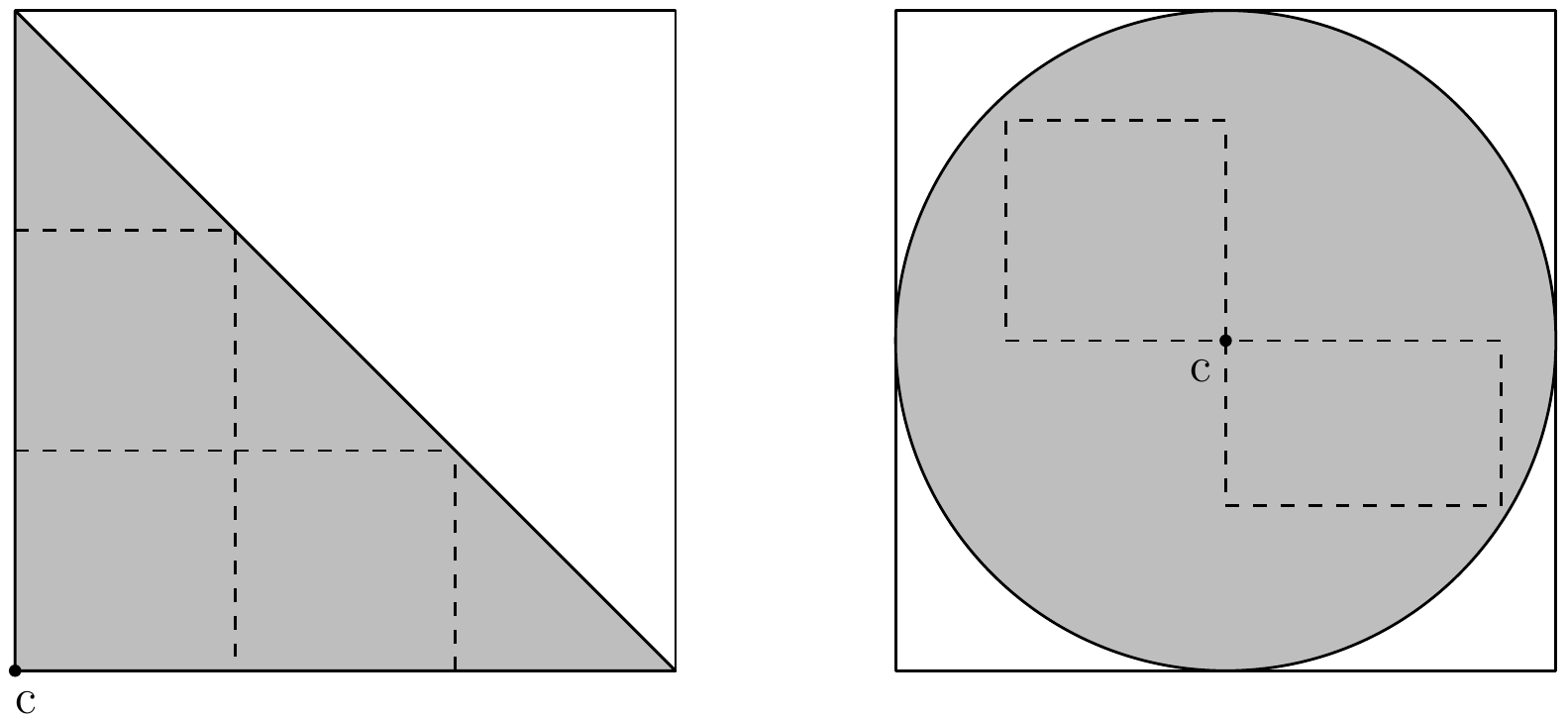}
\caption{\label{fig:extensible}
Sobol' extensible regions. At left, $\cx$ is the triangle with vertices
$(0,0)$, $(0,\sqrt{2})$, $(\sqrt{2},0)$ and the anchor  is $\bsc = (0,0)$. At right, $\cx$ is a circular 
disk centered its anchor $\bsc$. The dashed lines depict some rectangular hulls joining selected points to the anchor.}
\end{center}
\end{figure}

\begin{figure}[tb]
\begin{center}
\includegraphics[scale = 0.7]{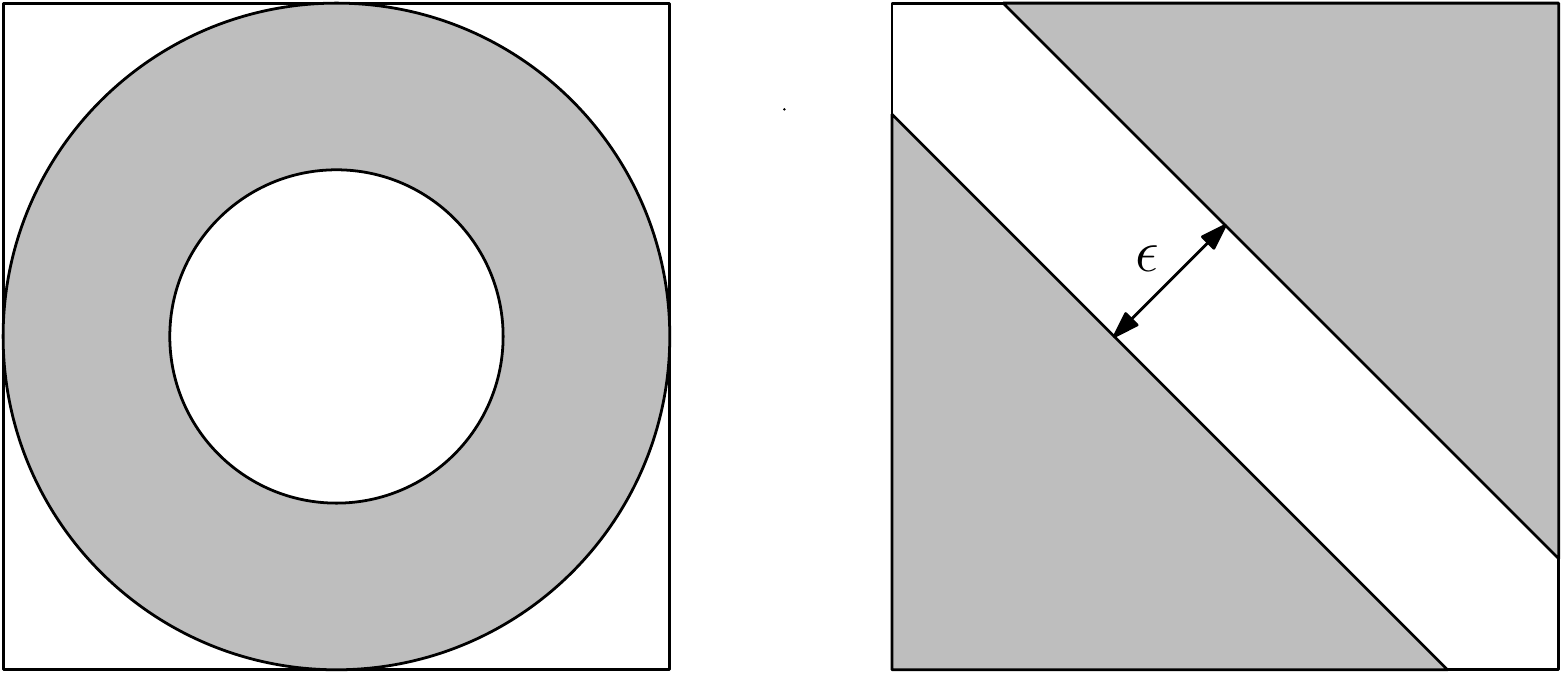}
\caption{\label{fig:non_sobol}
Non-Sobol' extensible regions. At left, $\cx$ is 
an annular region centered at the origin. 
At right, $\cx$ is the unit square exclusive of 
an $\epsilon$-wide strip centered on the diagonal.}
\end{center}
\end{figure}

\begin{definition}
Let $f$ be a smooth real-valued function on the Sobol' extensible
region $\cx\subset\real^m$.
The Sobol' extension of $f$ is the function $\tilde f:\real^m\to\real$
given by
\begin{equation}\label{sobol_ext}
\tilde{f}(\bsx) = \sum_{u\subseteq 1{:}m} \int_{[\bsc_u, \bsx_u]} \partial^u f(\bsc_{-u}{:}\bsy_u)
 1_{\bsc_{-u}{:}\bsy_u \in \cx} \rd \bsy_u 
\end{equation}
where $\bsc$ is the anchor of $\cx$.
\end{definition}

The Sobol' extension can be restricted to any domain $\cx'$
with $\cx\subset\cx'\subset\real^m$.
We usually use the Sobol' extension for $\tilde f$ from $\cx$ to $\rect(\cx)$.
This extension was used in \cite{sobo:1973} but not explained there.
An account of it appears in \cite{variation, owen2006quasi}. 
For $\bsx \in \cx$ the factor $1_{\bsc_{-u}{:}\bsy_u \in \cx}$ is always 1, making $\tilde{f}(\bsx) = f(\bsx)$  so that the term ``extension" is appropriate. 

Two simple examples serve to illustrate the Sobol' extension.
If $\cx = [0,1]$ and $f(x)=x$ on $0\le x\le 1$, then
the Sobol' extension of $f$ to $[0,\infty)$ is
$\tilde f(x) = \min(x,1)$.
If $\cx=[0,1]^2$ and $f(\bsx)=x_1x_2$ on $\cx$, 
then the Sobol' extension of $f$ to $[0,\infty)^2$ is
$\tilde f(\bsx)=\min(x_1,1)\times\min(x_2,1)$.
The Sobol' extension $\tilde f$ has a continuous mixed
partial derivative $\partial^{1{:}m}\tilde f$ for $\bsx$ in the interior
of $\cx$ and also in the interior of $\real^m\setminus\cx$
where $\partial^{1{:}m}\tilde f=0$ \citep{variation}.
As our examples show, $\partial^u\tilde f$ 
for $|u|>0$ may fail to exist at points of the boundary $\partial\cx$.  
A Sobol' extensible $\cx\subseteq\real^m$
has a boundary of $m$-dimensional measure $0$, so when forming
integrals of $\partial^u \tilde f$ we may ignore those points or simply
take those partial derivatives to be $0$ there.

The Sobol' extension has a useful property that we need.
It satisfies the multivariable fundamental theorem of
calculus, even though some of its partial derivatives
may fail to be continuous or even to exist everywhere. We can even
move the anchor from $\bsc$ to an arbitrary point $\bsz$.

\begin{theorem}\label{thm:ftocsobol}
Let $\cx\subset\real^m$ be a Sobol' extensible region
and let $f$ have a continuous mixed partial $\partial^{1{:}m}f$
on $\cx$.  Let $\tilde f$ be the Sobol' extension of $f$
and let $\bsz\in\real^m$. Then
\begin{equation}\label{eq:ftocsobol}
\tilde  f(\bsx) = \sum_{u \subseteq 1{:m}} \int_{[\bsz_u, \bsx_u]}\partial^u\tilde f(\bsz_{-u}{:}\bsy_u) \rd \bsy_u .
\end{equation}
\end{theorem}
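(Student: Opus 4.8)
The plan is to establish \eqref{eq:ftocsobol} by expanding the inner Sobol' extension on the right-hand side using its definition \eqref{sobol_ext}, and then reorganizing the resulting double sum of iterated integrals so that, for each fixed index set, the terms telescope back to $\tilde f(\bsx)$. First I would fix $\bsz\in\real^m$ and write $g(\bsy):=\partial^{1{:}m}\tilde f(\bsy)$ on the interior of $\cx$, extended by $0$ off $\cx$ (recall that $\partial^{1{:}m}\tilde f$ exists and is continuous on the interiors of $\cx$ and $\real^m\setminus\cx$, and the boundary has measure zero, so $g$ is well-defined a.e.\ and locally integrable). The key point is that, because $\tilde f$ already satisfies the multivariable fundamental theorem of calculus from the anchor $\bsc$ — which is what \eqref{eq:mvfund} applied to $\tilde f$ on $\rect(\cx)$ gives, the indicator having been absorbed into the definition of $\tilde f$ — we may write, for any set $w\subseteq 1{:}m$, the partial derivative $\partial^w\tilde f$ itself as an iterated integral of $g$ over the remaining coordinates anchored at $\bsc$. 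Substituting this representation of $\partial^u\tilde f(\bsz_{-u}{:}\bsy_u)$ into the right-hand side of \eqref{eq:ftocsobol} turns it into a sum over pairs $(u,w)$ with $w\subseteq -u$ of signed integrals of $g$, where on coordinates in $u$ we integrate from $z_j$ to $x_j$, on coordinates in $w$ from $c_j$ to $x_j$, and on coordinates in $-u-w$ we evaluate at $z_j$ (via $\bsz_{-u}$) but also integrate from $c_j$ to $z_j$.

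The heart of the argument is then a one-dimensional telescoping identity applied coordinate by coordinate. For a single coordinate $j$, the three contributions — "$\int_{c_j}^{x_j}$ with the variable live", "$\int_{c_j}^{z_j}$ then evaluate at $z_j$", and "evaluate at $z_j$ directly (no integral in $j$)" — combine, using $\int_{c_j}^{x_j} = \int_{c_j}^{z_j} + \int_{z_j}^{x_j}$, to reproduce exactly the pair of options "$\int_{z_j}^{x_j}$ live" versus "evaluate at $c_j$". Carrying this out simultaneously over all $m$ coordinates via Fubini (justified by local integrability of $g$ and boundedness of all the rectangular regions involved, since $\cx$ is bounded) collapses the $(u,w)$-sum into precisely the sum $\sum_{u\subseteq 1{:}m}\int_{[\bsc_u,\bsx_u]}\partial^u\tilde f(\bsc_{-u}{:}\bsy_u)\rd\bsy_u$, which by \eqref{eq:mvfund} for $\tilde f$ equals $\tilde f(\bsx)$. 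One must track signs with care: each factor $\int_{[\cdot,\cdot]}$ carries the sign $(-1)$ to the power of the number of coordinates on which the upper limit falls below the lower limit, and the coordinatewise splitting $\int_{c_j}^{x_j}=\int_{c_j}^{z_j}+\int_{z_j}^{x_j}$ is compatible with these signs because a signed integral over $[a,b]$ is additive in $b$ regardless of ordering.

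I expect the main obstacle to be the bookkeeping: organizing the double sum over $(u,w)$, matching it against the target single sum over $u$, and verifying that the signs and the anchors line up after the coordinatewise split — essentially a careful inclusion–exclusion over subsets of $1{:}m$. A clean way to manage this is to prove the coordinatewise identity as a lemma for $m=1$ first (three terms collapsing to two) and then obtain the general case by taking a tensor product of the $m=1$ statements, i.e.\ by induction on $m$, peeling off one coordinate at a time while holding the others fixed; the continuity of $g$ on the relevant open sets and the measure-zero boundary make the Fubini and additivity steps routine. A secondary subtlety worth stating explicitly is that $\partial^u\tilde f$ need not exist on $\partial\cx$, but since we only ever integrate it against Lebesgue measure over regions whose relevant boundary pieces are null, the representation of $\partial^u\tilde f$ as an iterated integral of $g$ holds in the a.e.\ sense that suffices here; this is exactly the licence already invoked in the discussion preceding the theorem.
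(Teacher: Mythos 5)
Your overall skeleton---substitute the anchored-at-$\bsc$ structure of the Sobol' extension into the right side of \eqref{eq:ftocsobol}, reorganize into a double sum over pairs of index sets, and collapse it coordinate by coordinate via $\int_{c_j}^{x_j}=\int_{c_j}^{z_j}+\int_{z_j}^{x_j}$---is the same route the paper takes, and the telescoping collapse is a sound way to finish. But the central claim you build on is wrong as stated: $\partial^u\tilde f$ is \emph{not} an iterated integral of $g=\partial^{1{:}m}\tilde f$ over the remaining coordinates, and the $(u,w)$ terms you describe (every coordinate integrated, integrand $g$) cannot be right. Differentiating \eqref{sobol_ext} gives, almost everywhere,
$\partial^u\tilde f(\bsz_{-u}{:}\bsy_u)=\sum_{w\subseteq -u}\int_{[\bsc_w,\bsz_w]}\partial^{u\cup w}f(\bsc_{-u-w}{:}\bst_w{:}\bsy_u)\,1_{\bsc_{-u-w}{:}\bst_w{:}\bsy_u\in\cx}\rd\bst_w$:
the coordinates outside $u\cup w$ are pinned at the anchor and not integrated, and the integrand is the order $|u|+|w|$ partial of $f$, which coincides with $g$ only when $u\cup w=1{:}m$. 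Under your reading all lower-order terms (including $f(\bsc)$ and the face terms) vanish; for instance, on a triangle with anchor $(0,0)$ and $f(x_1,x_2)=x_1$ one has $g=0$ a.e., so every term you write is zero, while $\tilde f$ is not constant. With the correct expansion your telescoping does work: for each $r=u\cup w$ the integrand depends only on $r$, so summing over the splittings of $r$ yields $\prod_{j\in r}\bigl(\int_{c_j}^{z_j}+\int_{z_j}^{x_j}\bigr)=\prod_{j\in r}\int_{c_j}^{x_j}$, and the sum over $r$ is exactly \eqref{sobol_ext} evaluated at $\bsx$, i.e.\ $\tilde f(\bsx)$---but this changes the bookkeeping you propose, and it lands on the definition of $\tilde f$ rather than on an ``FTC for $\tilde f$ at $\bsc$.''

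That points to the second gap: you justify both the base case and the final step by ``applying \eqref{eq:mvfund} to $\tilde f$,'' but \eqref{eq:mvfund} is stated for smooth functions and $\tilde f$ is generally not smooth on $\rect(\cx)$ (its mixed partials can fail to exist on $\partial\cx$), so invoking it for $\tilde f$ is essentially assuming the theorem at $\bsz=\bsc$. What is actually needed---and what constitutes the real work, supplied in the paper's proof by differentiating the defining formula term by term---is the a.e.\ Leibniz/fundamental-theorem computation for $\partial^u\tilde f$ displayed above (equivalently, at the anchor, $\partial^u\tilde f(\bsc_{-u}{:}\bsy_u)=\partial^u f(\bsc_{-u}{:}\bsy_u)1_{\bsc_{-u}{:}\bsy_u\in\cx}$ a.e., because the terms with $w\ne\emptyset$ have degenerate ranges there), with the indicator and the measure-zero boundary handled explicitly. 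Once that lemma is proved, the corrected double sum and your coordinatewise additivity argument complete the proof; as written, however, the proposal both misidentifies the integrands and omits the one derivation the argument genuinely requires.
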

\begin{proof}
Define
\begin{align}\label{eq:defg}
g(\bsx) = 
\sum_{v\subseteq 1{:}m}
\int_{[\bsz_v,\bsx_v]}\partial^v\tilde f(\bsz_{-v}{:}\bst_v)\rd\bst_v
\end{align}
where $\partial^v\tilde f$ is taken to be $0$ on
those sets of measure zero where it might not exist when $|v|>0$.
We need to show that $g(\bsx)=\tilde f(\bsx)$.
Now
\begin{align}\label{eq:innerg}
\tilde f(\bsz_{-v}{:}\bst_v)
= 
\sum_{u\subseteq1{:}m}
\int_{[\bsc_u,\bsz_{u\cap-v}{:}\bst_{u\cap v}]}
\partial_{\bsy}^uf(\bsc_{-u}{:}\bsy_u)
\cx(\bsc_{-u}{:}\bsy_u)\rd\bsy_u,
\end{align}
where for typographical convenience we have replaced $1_{\cdot\in\cx}$
by $\cx(\cdot)$. The subscript in $\partial^v_{\bsy}$ makes it easier
to keep track of the variables with respect to which that derivative is taken.
Substituting~\eqref{eq:innerg} into~\eqref{eq:defg},
we find that $g(\bsx)$ equals
\begin{align*}
&\sum_{v\subseteq 1{:}m}
\int_{[\bsz_v,\bsx_v]}\partial^v_{\bst}
\Biggl[\,\sum_{u\subseteq1{:}m}
\int_{[\bsc_u,\bsz_{u\cap-v}{:}\bst_{u\cap v}]} 
\partial^u_{\bsy}f(\bsc_{-u}{:}\bsy_u) 
\cx(\bsc_{-u}{:}\bsy_u)\rd\bsy_u 
\Biggr]\rd\bst_v\\
=&\sum_{v\subseteq 1{:}m}
\int_{[\bsz_v,\bsx_v]}\partial^v_{\bst}
\Biggl[\,\sum_{u\supseteq v}
\int_{[\bsc_u,\bst_{u}]}
\partial^u_{\bsy}f(\bsc_{-u}{:}\bsy_u) 
\cx(\bsc_{-u}{:}\bsy_u)\rd\bsy_u 
\Biggr]\rd\bst_v \\
=&\sum_{v\subseteq 1{:}m}
\int_{[\bsz_v,\bsx_v]}
\sum_{u\supseteq v}
\int_{[\bsc_{u-v},\bst_{u-v}]}
\partial^u_{\bsy}f(\bsc_{-u}{:}\bsy_{u-v}{:}\bst_v) 
\cx(\bsc_{-u}{:}\bsy_{u-v}{:}\bst_v) 
\rd\bsy_{u-v} \rd\bst_v. 
\end{align*} 
Now we introduce $w=u-v$ and rewrite the sum, getting
\begin{align*}
&\sum_{w\subseteq 1{:}m}\sum_{v\subseteq -w}
\int_{[\bsz_v,\bsx_v]}
\int_{[\bsc_{w},\bst_{w}]}
\partial^{w+v}_{\bsy}f(\bsc_{-w-v}{:}\bsy_{w}{:}\bst_v) 
\cx(\bsc_{-w-v}{:}\bsy_{w}{:}\bst_v) 
\rd\bsy_{w} \rd\bst_v\\
=&\sum_{w\subseteq 1{:}m}\sum_{v\subseteq -w}
\int_{[\bsz_v,\bsx_v]}
\partial^{v}_{\bsy}
\int_{[\bsc_{w},\bst_{w}]}
\partial^{w}_{\bsy}
f(\bsc_{-w-v}{:}\bsy_{w}{:}\bst_v) 
\cx(\bsc_{-w-v}{:}\bsy_{w}{:}\bst_v) 
\rd\bsy_{w} \rd\bst_v. 
\end{align*}
Any term above with $v\ne\emptyset$ vanishes. Therefore
\begin{align*}
g(\bsx)
&=
\sum_{w\subseteq 1{:}m}
\int_{[\bsz_\emptyset,\bsx_\emptyset]}
\int_{[\bsc_{w},\bst_{w}]}
\partial^{w}_{\bsy}
f(\bsc_{-w}{:}\bsy_{w}) 
\cx(\bsc_{-w}{:}\bsy_{w}) 
\rd\bsy_{w} \rd\bst_\emptyset\\
&=
\sum_{w\subseteq 1{:}m}
\int_{[\bsc_{w},\bst_{w}]}
\partial^{w}_{\bsy}
f(\bsc_{-w}{:}\bsy_{w}) 
\cx(\bsc_{-w}{:}\bsy_{w}) 
\rd\bsy_{w} \\
&=\tilde f(\bsx).\qedhere
\end{align*}
\end{proof}

\subsection{Whitney extension}

Here we assume that $\cx$ is a bounded closed set with non-empty interior,
not necessarily Sobol' extensible.
Sobol' extensible spaces may fail to have a non-empty interior, but outside
such odd cases, they are a subset of this class.
Non-Sobol' extensible regions like those in Figure~\ref{fig:non_sobol} are included. 
To handle domains $\cx$ of greater generality, we  require greater smoothness of $f$.

Let $\bsk\in\natu^m$ be any multi-index with $|\bsk| = k_1 + \ldots + k_{m}  \leq m$. 
We denote the $\bsk$-th order partial derivative as
\[
D_{\bsk} f(\bsx) = \frac{\partial^{|\bsk|}}{\partial x_1^{k_1}\cdots
\partial x_{m}^{k_{m}}} f(x_1, \ldots, x_{m}). 
\]

\begin{definition}
A real-valued function $f$ on $\cx\subset\real^m$ is in $C^m(\cx)$
if all partial derivatives of $f$ up to total order $m$ are continuous on $\cx$.
\end{definition}

Whitney's extension of a function in $C^m(\cx)$
to a function in $C^m(\rect(\cx))$ 
is given by the following lemma.
\begin{lemma}
Let $f\in C^m(\cx)$ for a bounded closed set $\cx\subset\real^m$ with
non-empty interior.
Then there exists a function $\tilde{f}\in C^m(\rect(\cx))$ with the following properties:
\begin{enumerate}
\item $\tilde{f}(\bsx) = f(\bsx)$ for all $\bsx \in \cx$,
\item $D_{\bsk}\tilde{f}(\bsx) = D_{\bsk}f(\bsx)$ for all $|\bsk| \leq m$ and 
$\bsx\in\cx$, and
\item $\tilde{f}$ is analytic on $\rect(\cx) \setminus \cx$.
\end{enumerate}  
\end{lemma}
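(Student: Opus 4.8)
The plan is to deduce this from Whitney's classical extension theorem for differentiable functions on closed sets \citep{whitney1934}, whose very title concerns the analytic extension off the closed set, once we check that the partial derivatives of $f$ assemble into a Whitney jet. Write $F_{\bsk}=D_{\bsk}f$ for each multi-index $\bsk$ with $|\bsk|\le m$; by hypothesis each $F_{\bsk}$ is continuous on $\cx$, and on the nonempty interior $\cx^{\circ}$ it is the genuine partial derivative of $f$.

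First I would verify the Whitney compatibility (Taylor-remainder) conditions: for every $\bsk$ with $|\bsk|\le m$ and every $\epsilon>0$ there is $\delta>0$ with
\[
\Bigl|\,F_{\bsk}(\bsy)-\sum_{|\bsell|\le m-|\bsk|}\frac{F_{\bsk+\bsell}(\bsx)}{\bsell!}\,(\bsy-\bsx)^{\bsell}\,\Bigr|\le\epsilon\,|\bsy-\bsx|^{m-|\bsk|}
\]
whenever $\bsx,\bsy\in\cx$ and $|\bsx-\bsy|<\delta$. For $\bsx,\bsy$ in the interior this is Taylor's theorem applied along the segment $[\bsx,\bsy]$ --- which, for the quasiconvex domains of interest (triangles, disks, spherical triangles, and their Cartesian products), can be taken inside $\cx$ up to a bounded-length detour --- with the $o(|\bsx-\bsy|^{m-|\bsk|})$ estimate supplied by the uniform continuity of the top-order derivatives $F_{\bsk+\bsell}$, $|\bsk+\bsell|=m$, on the compact set $\cx$. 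The inequality then extends to all of $\cx$, boundary points included, by continuity of the $F_{\bsk}$.

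Next I would invoke Whitney's theorem: the jet $(F_{\bsk})_{|\bsk|\le m}$ extends to a function $\tilde f$ that is $C^m$ on $\real^m$, with $D_{\bsk}\tilde f=F_{\bsk}$ on $\cx$, and --- since on $\real^m\setminus\cx$ the extension is built by gluing Taylor polynomials of the jet with an analytic partition of unity subordinate to a Whitney decomposition of the complement --- real-analytic on the open set $\real^m\setminus\cx$. Restricting $\tilde f$ to the box $\rect(\cx)\supseteq\cx$ yields the desired function: properties~(1) and~(2) hold because $\tilde f$ reproduces the jet on $\cx$, and property~(3) holds because $\rect(\cx)\setminus\cx\subseteq\real^m\setminus\cx$, where $\tilde f$ is real-analytic.

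The main obstacle is the jet-compatibility check near $\partial\cx$: the definition of $C^m(\cx)$ only requires the partials to be continuous on $\cx$, so one must ensure the Taylor remainder is controlled uniformly up to the boundary. This is automatic when any two nearby points of $\cx$ are joined by a path in $\cx$ of length comparable to their distance --- a property enjoyed by every $\cx^{(j)}$ we use and by finite Cartesian products of such sets --- but it can fail for pathological closed sets (for instance those with outward cusps); accordingly I would either impose that mild regularity explicitly or restrict the lemma to the domains used in the rest of the paper.
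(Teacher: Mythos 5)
Your proposal follows essentially the same route as the paper: the paper's proof is a one-line appeal to Whitney's extension theorem, asserting that ordinary $C^m(\cx)$ membership is \emph{a fortiori} $C^m$ in Whitney's jet sense, and then restricting Whitney's extension (which is analytic off $\cx$ and reproduces the jet on $\cx$) to $\rect(\cx)$. Your explicit check of the Taylor-remainder compatibility conditions, together with the caveat that this step requires a quasiconvexity/path-regularity property of $\cx$ (valid for the triangles, disks, spherical triangles and their products used here, but not for arbitrary bounded closed sets with non-empty interior), is precisely the point the paper's \emph{a fortiori} remark glosses over, so your version is, if anything, more careful while using the same key theorem.
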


\begin{proof}
The extension we need is the one provided by
\cite{whitney}. A function in $C^m(\cx)$ in the ordinary
sense is {\sl a fortiori} in $C^m(\cx)$ according to
Whitney's definition. We use the restriction of Whitney's
function to the domain $\rect(\cx)$.
\end{proof}

We will need one more condition on $\cx$. We require
the boundary of $\cx$ to have $m$-dimensional measure zero.
Then Theorem~\ref{thm:ftocsobol} in which the fundamental
theorem of calculus applies to $\tilde f$, holds also for the
Whitney extension.

\subsection{ANOVA components of extensions}

Here we show that the ANOVA components of our
smooth extensions are also smooth.  
We suppose that each $\cx^j\subset\real^{d_j}$
and we let $m=\sum_{j=1}^sd_j$.
The Cartesian product $\cx^\otos$ is now a subset of $\real^m$.

\begin{lemma}\label{smoothfu1}
Let $f$ be a smooth function on Sobol' extensible $\cx^\otos
\subset\real^m$ and for $u \subseteq 1{:}s$ 
let $f_u$ be the ANOVA component from \eqref{eq:anovau}. 
Then $f_u$ is smooth on $\cx^\otos$. 
\end{lemma}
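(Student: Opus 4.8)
The plan is to induct on $|u|$. For the base case $u=\emptyset$, \eqref{eq:anovau} gives $f_\emptyset\equiv\int_{\cx^\otos}f(\bsx)\rd\bsx=\mu$, a constant, which is smooth. Throughout, write $\bar u\subseteq 1{:}m$ for the set of coordinates belonging to the blocks $j\in u$ (so $|\bar u|=\sum_{j\in u}d_j$). Since $f_u$ depends only on $\bsx_u$, differentiating it once in any coordinate outside $\bar u$ gives zero, so the literal claim ``$\partial^{1{:}m}f_u$ continuous'' reduces to the substantive statement that $\partial^{\bar u}f_u$ is continuous on $\cx^\otos$ — which is also exactly the regularity needed later to form the Sobol'/Whitney extension of $f_u$ over the sub-product $\cx^u$. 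So it suffices to prove $\partial^{\bar u}f_u$ is continuous.

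For the inductive step, fix $u$ with $|u|>0$, assume $f_v$ is smooth for every $v\subsetneq u$ (equivalently every $v$ with $|v|<|u|$), and set $g=f-\sum_{v\subsetneq u}f_v$, so that $f_u(\bsx)=\int_{\cx^{-u}}g(\bsx)\rd\bsx_{-u}$. The two structural points I would rely on are: (i) $\cx^\otos$ factors as the Cartesian product $\cx^u\times\cx^{-u}$, so the region of integration $\cx^{-u}$ is a \emph{fixed} compact set of finite measure not varying with $\bsx_u$; and (ii) since $f$ is smooth and each $f_v$ ($v\subsetneq u$) is smooth by the inductive hypothesis, $g$ is smooth, and in particular every mixed partial $\partial^w g$ with $w\subseteq\bar u$ is continuous on the compact set $\cx^\otos$, hence bounded.

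Next I would differentiate under the integral sign, applying the Leibniz rule one coordinate of $\bar u$ at a time; each step is legitimate because the relevant partial of $g$ is continuous on $\cx^\otos$, hence bounded by a constant, which is integrable over the fixed finite-measure set $\cx^{-u}$. This yields
\begin{equation*}
\partial^{\bar u}f_u(\bsx)=\int_{\cx^{-u}}\partial^{\bar u}g(\bsx)\rd\bsx_{-u}.
\end{equation*}
Moreover $\partial^{\bar u}f_v=0$ for each $v\subsetneq u$: the block set $u\setminus v$ is nonempty, $f_v$ is constant along the coordinates of those blocks, and those coordinates lie in $\bar u$, so they are among the ones being differentiated. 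Hence $\partial^{\bar u}g=\partial^{\bar u}f$ and $\partial^{\bar u}f_u(\bsx)=\int_{\cx^{-u}}\partial^{\bar u}f(\bsx)\rd\bsx_{-u}$. Since $\partial^{\bar u}f$ is continuous on the compact set $\cx^\otos$, it is uniformly continuous there, so $\bsx_u\mapsto\int_{\cx^{-u}}\partial^{\bar u}f(\bsx)\rd\bsx_{-u}$ is continuous. Thus $\partial^{\bar u}f_u$ is continuous, i.e.\ $f_u$ is smooth, closing the induction.

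The hard part is not the induction or the bookkeeping but the justification of differentiation under the integral sign: one needs the intermediate mixed partials of $g$ (equivalently, of $f$) that appear along the way to exist and be continuous. This is precisely the content carried by the smoothness hypothesis in this paper's sense — the same content that makes the Sobol' representation \eqref{eq:mvfund} available — and the only other point requiring care is the elementary but essential observation that the factorization $\cx^\otos=\cx^u\times\cx^{-u}$ keeps the integration region fixed as $\bsx_u$ varies.
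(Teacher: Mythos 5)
Your proof is correct and follows essentially the same route as the paper: induction on $|u|$, writing $f_u$ as an integral over the fixed product factor $\cx^{-u}$, and differentiating under the integral sign via Leibniz, with continuity of the resulting integral of a continuous function closing the induction. The only cosmetic difference is that you fold the lower-order terms into $g=f-\sum_{v\subsetneq u}f_v$ and note $\partial^{\bar u}g=\partial^{\bar u}f$, whereas the paper treats $\int_{\cx^{-u}}f\rd\bsx_{-u}$ and the subtracted ANOVA terms separately, invoking the inductive hypothesis for the latter.
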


\begin{proof}
We prove this by induction on $|u|$. Let $|u| = 0$, that is $u = \emptyset$. 
Then $f_u(\bsx) = \int_{\cx^\otos} f(\bsx) \rd \bsx$ which is a constant $\mu$ 
and is therefore smooth on $\cx^\otos$. 
Let us suppose that the hypothesis holds for $|u| = k - 1<s$ and we shall prove it for $|u| = k$. 

Fix any $u \subseteq\otos$ such that $|u| = k$. By \eqref{eq:anovau} we have,
 \[f_u(\bsx) = \int_{\cx^{-u}} f(\bsx) \rd \bsx_{-u} - \sum_{w \subset u} f_w(\bsx),\]
using the fact that $f_w(\bsx)$ does not depend on $x_j$ for $j\not\in w$. 
Each term in the summation is $f_w$ for $|w| \leq k-1$ and is therefore smooth by the induction hypothesis. So we only need to show that the first term is smooth. Fix any $v \subseteq 
1{:}m$. Now since $f$ is smooth, $\partial^v f(x)$ is continuous on $\cx^\otos$ and hence applying Leibniz's integral rule we have,
\[ \partial^v \int_{\cx_{-u}} f(\bsx) \rd \bsx_{-u} = \int_{\cx_{-u}} \partial^v f(\bsx) \rd \bsx_{-u}. 
\]
Now the right hand side is the integral of a continuous function  and is therefore a continuous function. Thus the induction hypothesis hold for $|u|=k$ completing the proof.
\end{proof}

\begin{lemma}\label{smoothfu}
Let $f\in C^{m}(\cx^\otos)$ for a bounded closed set $\cx^\otos\in\real^m$
and for  $u \subseteq 1{:}s$ let $f_u$ 
be the ANOVA component in~\eqref{eq:anovau}.
Then $f_u\in C^{m}(\cx^\otos)$. 
\end{lemma}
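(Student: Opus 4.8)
The plan is to run exactly the induction on $|u|$ used in the proof of Lemma~\ref{smoothfu1}, but now tracking every mixed partial $D_{\bsk}$ with $|\bsk|\le m$ instead of only the single top-order derivative $\partial^{1{:}m}$. The base case $u=\emptyset$ is immediate, since $f_\emptyset(\bsx)=\int_{\cx^\otos}f(\bsx)\rd\bsx=\mu$ is constant and hence lies in $C^m(\cx^\otos)$. For the inductive step, I would assume $f_w\in C^m(\cx^\otos)$ for all $w$ with $|w|<k$, fix $u$ with $|u|=k$, and use \eqref{eq:anovau} together with the fact (as in Lemma~\ref{smoothfu1}) that each $f_w$ depends only on $\bsx_w$ and that $\vol(\cx^{-u})=1$, to write
\[
f_u(\bsx)=\int_{\cx^{-u}}f(\bsx)\rd\bsx_{-u}-\sum_{w\subsetneq u}f_w(\bsx).
\]
The finite sum is in $C^m(\cx^\otos)$ by the induction hypothesis, so the task reduces to showing that $g(\bsx):=\int_{\cx^{-u}}f(\bsx)\rd\bsx_{-u}$ belongs to $C^m(\cx^\otos)$.

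To handle $g$, I would differentiate under the integral sign. Since $\cx^\otos=\prod_{j=1}^s\cx^{(j)}$ is a product, the slice over which the integral is taken is $\cx^{-u}$ regardless of $\bsx_u$, so $g$ depends only on $\bsx_u$; consequently $D_{\bsk}g\equiv 0$ (hence trivially continuous) whenever $\bsk$ has a nonzero component indexed outside $u$. For a multi-index $\bsk$ supported on $u$ with $|\bsk|\le m$, the partial $D_{\bsk}f$ has total order at most $m$ and is therefore continuous on the compact set $\cx^\otos$, hence bounded there; this supplies the integrable domination needed to justify Leibniz's integral rule, giving
\[
D_{\bsk}g(\bsx)=\int_{\cx^{-u}}D_{\bsk}f(\bsx)\rd\bsx_{-u}.
\]
The right-hand side is the integral over a set of finite volume of a function that is (uniformly) continuous on $\cx^\otos$, so it is a continuous function of $\bsx_u$. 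Hence every $D_{\bsk}g$ with $|\bsk|\le m$ is continuous, i.e.\ $g\in C^m(\cx^\otos)$, and $f_u$, being a difference of functions in $C^m(\cx^\otos)$, is itself in $C^m(\cx^\otos)$, closing the induction.

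The only step that is genuinely more delicate than in Lemma~\ref{smoothfu1} is the multi-index bookkeeping: one must split a general $\bsk$ with $|\bsk|\le m$ into its $u$-part and its $-u$-part, note that the $-u$-part annihilates $g$ (because $g$ is constant in $\bsx_{-u}$), and check that the surviving $u$-part still has order $\le m$ so that $D_{\bsk}f$ is covered by the $C^m$ hypothesis and Leibniz's rule is applicable with the domination coming from compactness of $\cx^\otos$. Everything else — boundedness of continuous functions on the compact domain, and continuity of integrals of continuous integrands over a fixed finite-volume set — is routine and parallels the earlier lemma.
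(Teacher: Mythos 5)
Your proof is correct and follows essentially the same route as the paper: induction on $|u|$ using \eqref{eq:anovau}, with the term $\int_{\cx^{-u}}f(\bsx)\rd\bsx_{-u}$ handled by Leibniz's rule for every multi-index $\bsk$ with $|\bsk|\le m$, exactly as in the paper's adaptation of Lemma~\ref{smoothfu1}. Your explicit splitting of $\bsk$ into its $u$-part and $-u$-part (with the $-u$-part annihilating the integral term) is a slightly more careful piece of bookkeeping than the paper records, but it is the same argument.
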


\begin{proof}
The proof goes along the same lines as Lemma \ref{smoothfu1}. 
We replace $v$ in that proof by any multi-index $\bsell$ with $|\bsell| \leq m$. 
Now since $f$ is smooth $D_{\bsell} f(\bsx)$ is continuous on $\cx^\otos$ and hence applying the Leibniz's integral rule we have,
\[ D_{\bsell}\int_{\cx^{-u}} f(\bsx) \rd \bsx_{-u} = \int_{\cx^{-u}} 
D_{\bsell} f(\bsx) \rd \bsx_{-u}. 
\]
Now the right hand side is the integral of a continuous function over certain variables and is therefore a continuous function. 
\end{proof}

Now for a smooth function $f$ defined on a product $\cx^\otos$
of Sobol' extensible sets, or on a product of more general
spaces but with the smoothness required for a Whitney extension,
there exists an extension $\tilde{f}$ on $\rect(\cx^\otos)$ such that 
    \begin{equation}\label{expansion}
    \tilde{f}(x) = \sum_{u \subseteq 1{:}m} \int_{[\bsc_u,\bsx_u]} 
\partial^u \tilde{f}(\bsc_{-u}{:}\bsy_u)\rd \bsy_u 
    \end{equation} for some point $\bsc \in \rect(\cx^\otos)$. 

\section{Scrambled net variance for smooth functions}\label{sec:snetvar}
Here we prove that the variance of averages over scrambled geometric nets is
$O(n^{-1 -2/d} \log(n)^{s-1})$, under smoothness and sphericity conditions. 
The proof is similar to the one in
\cite{owen2008local} for scrambled nets.
We begin with notation for some Cartesian products of cells.
For this section we assume that $d_j=d$ is a constant dimension for all
$j\in\otos$. 

Let $b$ be the common base for recursive splits $\bx_j$ of $\cx^{(j)}\subset\real^d$ for $j\in\otos$.
Let $\kappa = (k_1, \ldots, k_s)$ and $\tau = (t_1, \ldots, t_s)$ be $s$-vectors 
with $k_j\in\natu$ and $t_j\in\ints_{b^{k_j}}$.
Then we write
\[ \mathbb{B}_{u\kappa\tau} 
= \prod_{j \in u}\cx_{j,(k_j,t_j)}\prod_{j \not\in u} \cx^{(j)}
\]
and
$\wt{\mathbb{B}}_{u\kappa\tau} =\rect(\bb_{u\kappa\tau})$.
For $j=1,\dots,s$, let
$S_j= ((j-1)d+1){:}(jd)$ and then for $u\subseteq1{:}s$, define
\begin{align}
\label{su}
S_u &= \bigcup_{j \in u}S_j.
\end{align}
Now let 
$\mathbb{S}_u = \{T\subseteq S_u\mid T\cap S_j\ne\emptyset, 
\ \forall j\in u\}.$ These are the subsets of $S_u$ that
contain at least one element of $S_j$ for each $j\in u$.
There are $2^d-1$ non-empty subsets of $S_j$, and so
\begin{equation}\label{card}
|\bs_u| = (2^d - 1)^{|u|}.
\end{equation}


\begin{lemma}
\label{lem_bound1}
Suppose that $f$ is a smooth function on the Sobol' extensible
region $\cx^\otos\subseteq\real^{ds}$, with extension $\tilde  f$. 
Let each $\cx^{(j)}$ have a convergent recursive split in base $b$
whose $k$-level cells have diameter at most $Cb^{-k/d}$ for $1\le C<\infty$.
Let $u\subseteq 1{:}s$ and 
let $\kappa$ and $\tau$ be $|u|$-tuples with components $k_j\in\natu$ 
and $t_j\in\ints_{b^{k_j}}$, respectively for $j \in u$. 
Let $\psi_{u\kappa\tau\gamma}$ be the multiresolution basis function
\eqref{eq:multiresfunction}
defined by the splits of $\cx^\otos$.
Then
\begin{equation}\label{eq:lem_bound1}
|\langle f, \psi_{u\kappa \tau \gamma}\rangle|  \leq 
\Bigl( 2 - \frac{2}{b}\Bigr)^{|u|} b^{-\frac{|\kappa|}{2}(1+\frac{2}{d}) - \frac{|u|}{2}} \sum_{v \in \bs_u} 
d^{|v|/2}C^{|v|} \sup_{\bsy \in \wt{\bb}_{u\kappa\tau}} |\partial^v \tilde{f}_u(\bsy)|.
\end{equation}
If $f\in C^{ds}(\cx^\otos)$ with Whitney extension $\tilde f$,
where each $\cx^{(j)}$ is a bounded closed set with non-empty
interior and a boundary of measure zero,
then~\eqref{eq:lem_bound1} holds regardless of whether
$\cx^\otos$ is Sobol' extensible.
\end{lemma}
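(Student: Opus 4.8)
The plan is to bound the inner product $\langle f,\psi_{u\kappa\tau\gamma}\rangle$ by reducing it to a statement about the ANOVA component $\tilde f_u$ and then applying the multivariable fundamental theorem of calculus \eqref{eq:ftocsobol} with a cleverly chosen anchor. First I would observe that, because $\psi_{u\kappa\tau\gamma}$ integrates to zero against any function constant in the coordinates $\bsx_j$ for $j\in u$ (indeed $\sum_{c\in\ints_b}\psi_{jk_jt_jc}=0$ a.e.\ and each factor has mean zero over the wide cell), only the part of $f$ that genuinely depends on all of $\bsx_j$, $j\in u$, survives. Concretely, $\langle f,\psi_{u\kappa\tau\gamma}\rangle = \langle f_u,\psi_{u\kappa\tau\gamma}\rangle = \langle \tilde f_u,\psi_{u\kappa\tau\gamma}\rangle$ where $\tilde f_u$ is the (Sobol' or Whitney) extension of the ANOVA component $f_u$, which is smooth by Lemma~\ref{smoothfu1} (resp.\ in $C^{ds}$ by Lemma~\ref{smoothfu}). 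This reduction is the conceptual heart and justifies why the bound features $\tilde f_u$ rather than $\tilde f$.

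Next I would exploit the tensor product structure. Since $\psi_{u\kappa\tau\gamma}(\bsx)=\prod_{j\in u}\psi_{jk_jt_jc_j}(\bsx_j)\prod_{j\notin u}\varphi_j(\bsx_j)$ and integrating against $\varphi_j=1$ just integrates out the $j\notin u$ coordinates, the inner product becomes an integral over $\bb_{u\kappa\tau}$ of $\tilde f_u$ against $\prod_{j\in u}\psi_{jk_jt_jc_j}$. Because each $\psi_{jk_jt_jc_j}$ has zero average over the wide cell $\cx_{j,(k_j,t_j)}$ and over each of its narrow subcells' complements, I can insert the expansion \eqref{eq:ftocsobol} of $\tilde f_u$ anchored at a point $\bsz$ lying in the narrow cell $\prod_{j\in u}\cx_{j,(k_j,t_j,c_j)}$: every term in \eqref{eq:ftocsobol} indexed by a set $v$ that fails to meet some $S_j$, $j\in u$, contributes a factor constant in $\bsx_j$ and hence integrates to zero against $\psi_{jk_jt_jc_j}$. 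So only $v\in\bs_u$ (those $T\subseteq S_u$ meeting every $S_j$) remain, which is exactly the index set in \eqref{eq:lem_bound1} with cardinality $(2^d-1)^{|u|}$ by \eqref{card}.

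For each surviving $v\in\bs_u$, I would bound the corresponding term crudely: $|\partial^v\tilde f_u|\le \sup_{\bsy\in\wt\bb_{u\kappa\tau}}|\partial^v\tilde f_u(\bsy)|$, the domain of integration $[\bsz_v,\bsx_v]$ over which $\partial^v\tilde f_u$ is integrated has Lebesgue measure at most $\prod_{j: S_j\cap v\neq\emptyset}\diam(\cx_{j,(k_j,t_j)})^{|v\cap S_j|}\le \prod_{j\in u} (Cb^{-k_j/d})^{|v\cap S_j|} = C^{|v|}b^{-|v|\cdot(\text{weighted }|\kappa|)/d}$ — here I use the sphericity bound $\diam(\cx_{j,(k_j,t_j)})\le Cb^{-k_j/d}$ and \eqref{eq:diamrect} to pass to the rectangular hull, contributing the $d^{|v|/2}$ factors. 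Combined with the pointwise size of $\psi_{jk_jt_jc_j}$, namely $b^{(k_j+1)/2}$ on the narrow cell, the $L^1$-mass $\|\psi_{jk_jt_jc_j}\|_1 \le 2(1-1/b)b^{-(k_j+1)/2}\cdot b^{k_j}\cdot\ldots$ — I will track the exponents carefully so that the product over $j\in u$ of the cell volume $b^{-k_j}$, the $\psi$ normalization, and the derivative-integration volume assembles to the stated prefactor $(2-2/b)^{|u|}b^{-\frac{|\kappa|}{2}(1+2/d)-|u|/2}$. The arithmetic on these exponents — reconciling the $b^{-k_j}$ from cell volume, $b^{(k_j\pm1)/2}$ from $\psi$, and $b^{-k_j|v\cap S_j|/d}$ from the derivative integral into a clean $b^{-\frac{k_j}{2}(1+2/d)}$ independent of which $v$ is chosen — is the main obstacle and the place where the argument must be done with care rather than estimates. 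The Whitney case is identical once one invokes the remark after the Whitney lemma that \eqref{eq:ftocsobol} holds for the Whitney extension too, given the boundary has measure zero.
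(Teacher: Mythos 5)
Your proposal is correct and follows essentially the same route as the paper's proof: reduce $\langle f,\psi_{u\kappa\tau\gamma}\rangle$ to $\langle \tilde f_u,\psi_{u\kappa\tau\gamma}\rangle$ via Lemma~\ref{smoothfu1} (or~\ref{smoothfu}), expand $\tilde f_u$ by the fundamental-theorem identity~\eqref{eq:ftocsobol} with an anchor inside $\wt{\bb}_{u\kappa\tau}$ (the paper uses its center, you a point of the narrow cell -- either works), discard $v\notin\bs_u$ by orthogonality to $bN_{jk_jt_jc_j}-W_{jk_jt_j}$, and bound each surviving term by $\vol(\rect[\bsz_v,\bsx_v])\le (C\sqrt d)^{|v|}b^{-|\kappa|/d}$ times the sup of $\partial^v\tilde f_u$, with the $(2-2/b)^{|u|}$ coming from the $L^1$ mass of the wavelet factors. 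The only part you leave implicit is the routine exponent bookkeeping that the paper carries out explicitly, and your identified factors do assemble to the stated prefactor.
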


Note: Recall that we assume  that $\partial^v\tilde f_u$ takes the value $0$
in places where it is not well defined.  Alternatively one could
use the essential supremum instead of the supremum
in~\eqref{eq:lem_bound1}.
Later when we use $\Vert\cdot\Vert_\infty$ it will
denote the essential supremum of its argument.

\begin{proof}
The same proof applies to both smoothness assumptions.
From the definition we have
\[\begin{split}
\langle f, \psi_{u\kappa \tau \gamma}\rangle &= \langle f_u, \psi_{u\kappa \tau \gamma}\rangle \\
&= \int_{\cx^{-u}}\int_{\cx^u} f_u (\bsx) \psi_{u\kappa \tau \gamma} (\bsx) \rd \bsx_{u}\rd\bsx_{-u}\\
&= b^{-(|\kappa| + |u|)/2} \int_{\cx^u} f_u(\bsx) \prod_{j \in u} b^{k_j } \bigl(bN_{jk_jt_jc_j}(x_j) - W_{jk_jt_j}(x_j)\bigr) \rd \bsx_u.  
\end{split}
\]

By either Lemma \ref{smoothfu1} or Lemma \ref{smoothfu}, $f_u$ is smooth and we let $\tilde{f}_u$ be its extension. We know $\tilde{f}_u(\bsx) = f_u(\bsx)$ for all $\bsx \in \cx^\otos$. 
As the above integral is over $\cx^u$, we can write it as 
\begin{equation}\label{int_lemma_bound1}
b^{-(|\kappa| + |u|)/2} \int_{\cx^u} \tilde{f}_u(\bsx) \prod_{j \in u} b^{k_j} \left(bN_{k_jt_jc_j}(x_j) - W_{k_jt_j}(x_j)\right) \rd \bsx_u.
\end{equation}
Now $\tilde{f}_u$ is smooth on $\rect(\cx^u)$ and depends only on 
$\bsx_u$. Applying \eqref{expansion} we can write,
\begin{align}\label{eq:intv}
\tilde{f}_u(\bsx) = \sum_{v \subseteq S_u} \int_{[\bsz_v,\bsx_v]} 
\partial^v \tilde{f}_u(\bsz_{-v}{:}\bsy_v)\rd \bsy_v,
\end{align}
choosing to place the anchor $\bsz$ at the center of $\wt{\bb}_{u\kappa\tau}$.
Note that if $v \not\in \bs_u$, then there exists an index 
$j \in u$ such that $S_j\cap v=\emptyset$ 
and then the integral in~\eqref{eq:intv} above does not depend on $\bsx_j$ 
making it orthogonal to $bN_{jk_jt_jc_j}(\bsx_{j}) - W_{jk_jt_j}(\bsx_{j})$. 
Also the integrand in \eqref{int_lemma_bound1} is supported only for $\bsx_u \in  \bb_{u\kappa\tau}$. 
Putting these together we get,
\[ 
\begin{split}
&\,\quad b^{(|\kappa| + |u|)/2}|\langle f, \psi_{u\kappa \tau \gamma}\rangle|\\
&= \biggl| \int \sum_{v \in\bs_u} \int_{[\bsz_v,\bsx_v]} 
\partial^v \tilde{f}_u(\bsz_{-v}{:}\bsy_v)\rd \bsy_v \prod_{j \in u} b^{k_j} 
\bigl(bN_{jk_jt_jc_j}(\bsx_j) - W_{jk_jt_j}(\bsx_j)\bigr) \rd \bsx_u\biggr|\\
&\leq \sum_{v \in \bs_u} \sup_{\bsx_u \in  \mathbb{B}_{u\kappa\tau}} \biggl|  
\int_{[\bsz_v,\bsx_v]} \partial^v \tilde{f}_u(\bsz_{-v}{:}\bsy_v)\rd \bsy_v \biggr| \times \\
&\qquad\qquad \int_{\cx^u} \prod_{j \in u} b^{k_j} \left|bN_{jk_jt_jc_j}(\bsx_j) - W_{jk_jt_j}(\bsx_j)\right| \rd \bsx_u \\
&= \Bigl( 2 - \frac{2}{b}\Bigr)^{|u|}  \sum_{v \in \bs_u} \sup_{\bsx_u \in  \bb_{u\kappa\tau}} 
\biggl|  \int_{[\bsz_v,\bsx_v]} \partial^v \tilde{f}_u(\bsz_{-v}{:}\bsy_v)\rd \bsy_v \biggr|.
\end{split}
\]
Now since $\partial^v\tilde{f}_u$ is bounded we can write,
\[
\begin{split}
\biggl| \int_{[\bsz_v,\bsx_v]} \partial^v \tilde{f}_u(\bsz_{-v}{:}\bsy_v)\rd \bsy_v \biggr|
&\leq \vol(\rect[\bsz_v,\bsx_v])  \sup_{\bsy \in \tilde{\bb}_{u\kappa\tau}}  | \partial^v \tilde{f}_u(\bsy)|. 
\end{split}
\] 
Because $\bsz\in\tilde{\bb}_{u\kappa\tau}$ 
we have 
\begin{align*}
\vol(\rect[\bsz_v,\bsx_v]) 
&= \prod_{\ell\in v}|z_\ell-x_\ell|
\le C^{|v|}d^{|v|/2}\prod_{j\in v}
(b^{-k_j/d})^{|v\cap S_j|}\\
&\le (Cd^{1/2})^{|v|}b^{-|\kappa|/d}.
\end{align*}
The last inequality follows because $|v\cap S_j|\ge1$ for all $j\in u$
and also uses equation~\eqref{eq:diamrect} on the diameter of a bounding box.
Finally, putting it all together, we get
\[
|\langle f, \psi_{u\kappa \tau \gamma}\rangle| \leq \Bigl( 2 - \frac{2}{b}\Bigr)^{|u|} 
b^{-\frac{|\kappa|}{2}\left(1+\frac{2}{d}\right) - \frac{|u|}{2}} \sum_{v \in \bs_u} C^{|v|} d^{|v|/2}
\sup_{\bsy \in \wt{\bb}_{u\kappa\tau}} |\partial^v \tilde{f}_u(\bsy)|.\qedhere
\]
\end{proof}

The factor $d^{|v|/2}$ in the bound can be as large as $d^{s/2}$ in applications,
which may be quite large.  It arises as a $|v|$-fold product of ratios $\diam(\rect(\cdot))/\diam(\cdot)$
for cells.  For rectangular cells that product is $1$.  Similarly for cells that are `axis parallel' right-angle
triangles, the product is again $1$.

\begin{lemma} Let $h_u(\bsz) = \max_{v \in \bs_u} | \partial^v \tilde{f}_u(\bsz)|$ 
for $\bsz \in \wt{\bb}_{u\kappa\tau}$.
Under the conditions of Lemma \ref{lem_bound1},
\begin{equation*}
\sigma_{u\kappa}^2 \leq \biggl[\wt{C}^2\Bigl( 2 - \frac{2}{b}\Bigr)^3\biggr]^{|u|} 
b^{-2|\kappa|/d} \Vert h_u\Vert_\infty^2 
\end{equation*}
where $\wt{C} = d^{1/2}(2^d - 1)C^d$.
\end{lemma}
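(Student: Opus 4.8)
The plan is to bound $\sigma_{u\kappa}^2 = \int_{\cx^\otos}\nu_{u\kappa}^2(\bsx)\rd\bsx$ by summing the squared inner products $\langle f,\psi_{u\kappa\tau\gamma}\rangle^2$ over the relevant indices, using Lemma~\ref{lem_bound1}, and then counting the number of terms. Recall that $\nu_{u\kappa}(\bsx) = \sum_{\tau\mid u,\kappa}\sum_{\gamma\mid u}\langle f,\psi_{u\kappa\tau\gamma}\rangle\psi_{u\kappa\tau\gamma}(\bsx)$, and that $\nu_{u\kappa}$ is constant on each elementary region $\prod_{j\in u}\cx_{j,(k_j,t_j,c_j)}\prod_{j\not\in u}\cx^{(j)}$. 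First I would use the near-orthogonality of the $\psi_{u\kappa\tau\gamma}$ (distinct $\tau$ give functions with disjoint supports up to measure zero, and for fixed $\tau$ the $b^{|u|}$ functions over $\gamma$ have a bounded Gram matrix since $\int\psi_{ktc}^2 = (b-1)/b$ and $\sum_c\psi_{ktc}=0$) to get
\begin{equation*}
\sigma_{u\kappa}^2 \le \Bigl(\frac{b-1}{b}\Bigr)^{|u|}\sum_{\tau\mid u,\kappa}\sum_{\gamma\mid u}\langle f,\psi_{u\kappa\tau\gamma}\rangle^2,
\end{equation*}
or something of this shape with an explicit constant of the form $((b-1)/b)^{|u|}$ absorbed appropriately; the precise constant bookkeeping is routine once the frame structure from~\eqref{eq:tightframe} is invoked.

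Next I would insert the bound from Lemma~\ref{lem_bound1} for each $|\langle f,\psi_{u\kappa\tau\gamma}\rangle|$. Squaring it produces a factor $(2-2/b)^{2|u|}\, b^{-|\kappa|(1+2/d)-|u|}$ times $\bigl(\sum_{v\in\bs_u}d^{|v|/2}C^{|v|}\sup_{\wt\bb_{u\kappa\tau}}|\partial^v\tilde f_u|\bigr)^2$. Bounding each $\sup|\partial^v\tilde f_u|$ over $\wt\bb_{u\kappa\tau}$ by $\|h_u\|_\infty$ and using that $|\bs_u| = (2^d-1)^{|u|}$ from~\eqref{card}, the inner sum is at most $(2^d-1)^{|u|}(d^{1/2}C^d)^{|u|}\cdot\sup|\cdot|$... more carefully, $\sum_{v\in\bs_u}d^{|v|/2}C^{|v|} \le (2^d-1)^{|u|}\cdot d^{|S_u|/2}C^{|S_u|}$ — but $|v|$ ranges only up to $|S_u| = d|u|$, and one wants the clean bound $\sum_{v\in\bs_u}d^{|v|/2}C^{|v|}\le ((2^d-1)d^{d/2}C^d)^{|u|}$, which follows by noting $\bs_u$ factors as a product over $j\in u$ of the $2^d-1$ nonempty subsets of $S_j$, each contributing at most $d^{d/2}C^d$. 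Hence the squared inner sum is at most $\bigl((2^d-1)d^{d/2}C^d\bigr)^{2|u|}\|h_u\|_\infty^2 = \wt C^{2|u|}\|h_u\|_\infty^2$ with $\wt C = d^{1/2}(2^d-1)C^d$.

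Then I would count the $\tau$ and $\gamma$ sums: there are $\prod_{j\in u}b^{k_j} = b^{|\kappa|}$ choices of $\tau$ and $b^{|u|}$ choices of $\gamma$. Collecting everything:
\begin{equation*}
\sigma_{u\kappa}^2 \le \Bigl(\frac{b-1}{b}\Bigr)^{|u|}\cdot b^{|\kappa|}\cdot b^{|u|}\cdot(2-2/b)^{2|u|}\, b^{-|\kappa|(1+2/d)-|u|}\cdot\wt C^{2|u|}\|h_u\|_\infty^2.
\end{equation*}
The powers of $b$ combine as $b^{|\kappa|+|u|-|\kappa|(1+2/d)-|u|} = b^{-2|\kappa|/d}$, and the $b$-dependent prefactors collect into $\bigl((b-1)/b\bigr)^{|u|}(2-2/b)^{2|u|}\wt C^{2|u|} = \bigl[\wt C^2(2-2/b)^3\bigr]^{|u|}$ since $(b-1)/b \cdot (2-2/b)^2 = (2-2/b)\cdot(2-2/b)^2 = (2-2/b)^3$ — wait, $(b-1)/b = \tfrac12(2-2/b)$, so the product is $\tfrac12(2-2/b)^3$, giving an extra harmless $2^{-|u|}$ which only improves the bound; I would just state the bound as written in the lemma. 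This yields exactly $\sigma_{u\kappa}^2 \le \bigl[\wt C^2(2-2/b)^3\bigr]^{|u|}b^{-2|\kappa|/d}\|h_u\|_\infty^2$.

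\textbf{Main obstacle.} The delicate step is the first one: controlling $\int\nu_{u\kappa}^2$ by a sum of squared coefficients when the $\psi_{u\kappa\tau\gamma}$ over $\gamma$ are \emph{not} orthogonal. One must either diagonalize the per-cell Gram matrix of the $b$ functions $\{\psi_{ktc}\}_{c\in\ints_b}$ (whose nonzero eigenvalue structure is explicit) and track the resulting constant, or — more in the spirit of \cite{owensinum} — recognize that $\nu_{u\kappa}$ restricted to an elementary region is a linear functional of cell averages and bound its $L^2$ norm directly. Either way the constant that emerges must be no worse than $\bigl((b-1)/b\bigr)^{|u|}$ for the final prefactor to close; verifying this (rather than a larger constant like $b^{|u|}$) is where the argument needs care. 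Everything downstream — inserting Lemma~\ref{lem_bound1}, the $|\bs_u|$ count, and the arithmetic of the exponents of $b$ — is bookkeeping.
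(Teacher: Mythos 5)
Your overall architecture is the same as the paper's: split the $\tau$-sum using disjoint supports, handle the non-orthogonality over $\gamma$, insert Lemma~\ref{lem_bound1}, count the $b^{|\kappa|}$ values of $\tau$ and $b^{|u|}$ values of $\gamma$, and collect powers of $b$ (your exponent arithmetic is right). The genuine gap is your first display: the inequality $\sigma_{u\kappa}^2\le\bigl(\tfrac{b-1}{b}\bigr)^{|u|}\sum_{\tau,\gamma}\langle f,\psi_{u\kappa\tau\gamma}\rangle^2$ is false in general. For fixed $\tau$ and one coordinate, the Gram matrix of $\{\psi_{ktc}\}_{c\in\ints_b}$ has entries $\int\psi_{ktc}\psi_{ktc'}=1_{c=c'}-b^{-1}$, i.e.\ it is $I-b^{-1}J$; if the coefficients $a_c$ sum to zero then $\Vert\sum_c a_c\psi_{ktc}\Vert^2=\sum_c a_c^2$, which already exceeds $\tfrac{b-1}{b}\sum_c a_c^2$. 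So your ``main obstacle'' paragraph misdiagnoses what is needed: the Bessel-type constant cannot be pushed below $1$, but it does not need to be. Since $I-b^{-1}J$ has eigenvalues $0$ and $1$, the tensor-product Gram matrix over $\gamma$ has operator norm $1$, giving $\sigma_{u\kappa}^2\le\sum_{\tau,\gamma}\langle f,\psi_{u\kappa\tau\gamma}\rangle^2$ with constant $1^{|u|}$; and because $2-2/b\ge1$ for $b\ge2$, the resulting prefactor $(2-2/b)^{2|u|}\wt{C}^{2|u|}$ is still at most $[\wt{C}^2(2-2/b)^3]^{|u|}$, so the lemma follows. With that one-line repair your proof closes. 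The paper itself avoids any Bessel inequality: it expands $\int\nu_{u\kappa}^2$ as the double sum over $\gamma,\gamma'$ with the exact cross-moments $\prod_{j\in u}(1_{c_j=c_j'}-b^{-1})$, bounds every coefficient uniformly by Lemma~\ref{lem_bound1}, and then controls $\sum_{\gamma,\gamma'}\prod_{j\in u}|1_{c_j=c_j'}-b^{-1}|=(2(b-1))^{|u|}$ --- an $\ell_1$ rather than operator-norm bound on the Gram matrix --- with the factor $b^{|u|}$ absorbed by the $b^{-|u|}$ coming from the squared coefficient bound; that is the source of the third power of $(2-2/b)$ in the stated constant.

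A smaller point, which you share with the paper: from Lemma~\ref{lem_bound1} you correctly get $\sum_{v\in\bs_u}d^{|v|/2}C^{|v|}\le\bigl((2^d-1)d^{d/2}C^d\bigr)^{|u|}$, but you then equate $\bigl((2^d-1)d^{d/2}C^d\bigr)^{2|u|}$ with $\wt{C}^{2|u|}$ where $\wt{C}=d^{1/2}(2^d-1)C^d$; these agree only for $d=1$, and for $d\ge2$ your ``$=$'' is not an equality. The paper's own proof makes the same silent substitution ($d^{|u|}$ where the argument yields $d^{d|u|}$), and since only finiteness of the constant matters for Theorem~\ref{thm:main} this is harmless, but if you want the constant exactly as stated you should either carry $d^{d/2}$ through or note that the per-coordinate width of $\wt{\bb}_{u\kappa\tau}$ is already bounded by the cell diameter, which removes the $d^{|v|/2}$ factor altogether.
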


\begin{proof}
The supports of $\psi_{u\kappa\tau\gamma}$ and $\psi_{u\kappa\tau'\gamma'}$ are disjoint unless $\tau = \tau'$. Therefore
\[\nu_{u\kappa}^2(\bsx) = \sum_{\tau\mid u}\sum_{\gamma,\gamma'\mid u}\langle f, \psi_{u\kappa \tau \gamma}\rangle \langle f, \psi_{u\kappa \tau \gamma'}\rangle \psi_{u\kappa\tau\gamma}(\bsx) \psi_{u\kappa\tau\gamma'}(\bsx). 
\] 
Now
\[
\begin{split}
\sigma_{u\kappa}^2 &= \int_{\cx^\otos} \nu_{u\kappa}^2(\bsx) \rd \bsx \\
&= \sum_{\tau\mid u,\kappa}\sum_{\gamma,\gamma'\mid u}\langle f, \psi_{u\kappa \tau \gamma}\rangle \langle f, \psi_{u\kappa \tau \gamma'}\rangle \int_{\cx^\otos} \psi_{u\kappa\tau\gamma}(\bsx) \psi_{u\kappa\tau\gamma'}(\bsx) \rd \bsx\\
&= \sum_{\tau\mid u,\kappa}\sum_{\gamma,\gamma'\mid u}\langle f, \psi_{u\kappa \tau \gamma}\rangle \langle f, \psi_{u\kappa \tau \gamma'}\rangle \prod_{j\in u}( 1_{c_j = c_j'}  - b^{-1})\\
&\leq 
\Bigl( 2 - \frac{2}{b}\Bigr)^{2|u|} 
b^{-|\kappa|(1+\frac{2}{d}) - |u|} 
\sum_{\tau\mid u,\kappa} \Biggl(\sum_{v \in \bs_u} d^{|v|/2}C^{|v|}
\sup_{\bsy \in \wt{\bb}_{u\kappa\tau}} |\partial^v \tilde{f}_u(\bsy)|\Biggr)^2 
\sum_{\gamma,\gamma'\mid u} \prod_{j\in u}( 1_{c_j = c_j'}  - b^{-1}).
\end{split}\]

Some algebra shows that
$\sum_{\gamma,\gamma'\mid u} \prod_{j\in u}( 1_{c_j = c_j'}  - b^{-1}) = (2-2/b)^{|u|}$.
The supremum above is at most $\Vert h_u\Vert_\infty$.
From equation~\eqref{card}, we have $|\bs_u| = (2^d -1)^{|u|}$ and also $C^{|v|}
\le C^{d|u|}$ for $v\in \bs_u$.
There are $b^{|\kappa|}$ indices $\tau$ in the sum given $u$ and $\kappa$.
From these considerations,
\begin{align*}
\sigma^2_{u\kappa} & \le 
\Bigl( 2 - \frac{2}{b}\Bigr)^{3|u|} b^{-2|\kappa|/d}\Vert h_u\Vert_\infty^2(2^d-1)^{2|u|}d^{|u|}C^{2d|u|}\\
&\le
\biggl[\wt{C}^2\Bigl( 2 - \frac{2}{b}\Bigr)^3\biggr]^{|u|} 
b^{-2|\kappa|/d}\Vert h_u\Vert_\infty^2.\qedhere
\end{align*}
\end{proof}

\begin{theorem}\label{thm:main}
Let $\bsu_1, \ldots, \bsu_n$ be the points of a randomized $(t,m,s)$-net in base $b$. 
Let $\bsx_i = \phi(\bsu_i)\in\cx^\otos$ for $i = 1, \dots, n$ 
where $\phi$ is the componentwise application of the transformation
from convergent recursive splits in  base $b$.
Suppose as $n \rightarrow \infty$ with $t$ fixed, that all the gain coefficients of the net satisfy $\Gamma_{u\kappa} \leq G < \infty$. 
Then for a smooth $f$ on $\cx^\otos$,
\[\var(\hat{\mu}) =  O\left(\frac{(\log n)^{s-1}}{n^{1 + 2/d}}\right).\]
If $f\in C^{ds}(\cx^\otos)$ 
where each $\cx^{(j)}$ is a bounded closed set with non-empty
interior and a boundary of measure zero,
then~\eqref{eq:lem_bound1} holds regardless of whether
$\cx^\otos$ is Sobol' extensible.
\end{theorem}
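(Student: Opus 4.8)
The plan is to combine the two lemmas just proved (the bound on $\sigma^2_{u\kappa}$ in terms of $\|h_u\|_\infty$ and $b^{-2|\kappa|/d}$) with the gain-coefficient expansion $\var(\hat\mu) = \frac1n\sum_{|u|>0}\sum_{\kappa\mid u}\Gamma_{u\kappa}\sigma^2_{u\kappa}$ from Section~\ref{sec:mrotos}, exactly paralleling the $d=1$ argument of \cite{owen2008local}. The estimate $\Gamma_{u\kappa}\le G$ is assumed uniformly, so the whole problem reduces to summing $\sigma^2_{u\kappa}$ over $\kappa$. Since $\tilde f_u\in C^{ds}$ (or is smooth in the Sobol' case) on the compact set $\rect(\cx^\otos)$, the quantity $\|h_u\|_\infty = \sup_{\bsz}\max_{v\in\bs_u}|\partial^v\tilde f_u(\bsz)|$ is a finite constant depending only on $f$, $u$, and the geometry, so I will absorb it together with the bracketed constant $[\wt C^2(2-2/b)^3]^{|u|}$ into a single constant $M_u$.

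\textbf{Key steps.} First, fix $u$ with $|u|=\ell>0$ and bound $\sum_{\kappa\mid u}\sigma^2_{u\kappa}$. Because $\Gamma_{u\kappa}=0$ whenever $|\kappa|+|u|\le m-t$ (the vanishing property of gain coefficients for a $(t,m,s)$-net, cited after~\eqref{var_multi}), only terms with $|\kappa|> m-t-\ell$ survive. For each fixed value $|\kappa|=K$ there are $\binom{K-1}{\ell-1}=O(K^{\ell-1})$ compositions $\kappa\in\natu^\ell$ with $\sum_j k_j=K$ and $k_j\ge1$ — and one must handle $k_j=0$ components separately, but those only contribute finitely many extra $\kappa$'s per level. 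Using $\sigma^2_{u\kappa}\le M_u\, b^{-2K/d}$ from the second lemma gives
\begin{equation*}
\sum_{\kappa\mid u}\Gamma_{u\kappa}\sigma^2_{u\kappa}
\le G\,M_u\sum_{K> m-t-\ell}O(K^{\ell-1})\,b^{-2K/d}
= O\bigl(m^{\ell-1} b^{-2(m-t)/d}\bigr),
\end{equation*}
since the geometric factor $b^{-2K/d}$ makes the tail sum dominated by its first term up to the polynomial prefactor $K^{\ell-1}\le$ const$\cdot m^{\ell-1}$. Next, summing over the finitely many $u\subseteq1{:}s$ with $|u|>0$, the largest exponent on $m$ comes from $\ell=|u|=s$, giving $\sum_{|u|>0}\sum_\kappa\Gamma_{u\kappa}\sigma^2_{u\kappa} = O(m^{s-1}b^{-2(m-t)/d})$. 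Finally, substitute $n=b^m$, so $b^{-2(m-t)/d}=b^{2t/d}n^{-2/d}=O(n^{-2/d})$ and $m^{s-1}=(\log_b n)^{s-1}=O((\log n)^{s-1})$; dividing by the outer $1/n$ from the variance formula yields $\var(\hat\mu)=O\bigl((\log n)^{s-1}/n^{1+2/d}\bigr)$, which is~\eqref{eq:mainresult}. The Whitney case is identical once one invokes Lemma~\ref{smoothfu} in place of Lemma~\ref{smoothfu1} to guarantee $f_u\in C^{ds}$ and hence $\|h_u\|_\infty<\infty$; this is already built into the statements of the two preceding lemmas.

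\textbf{Main obstacle.} The one genuinely delicate point is the combinatorial bookkeeping of the sum over $\kappa$: correctly counting multi-indices $\kappa\in\natu^{|u|}$ with a given $|\kappa|$, dealing with the components $k_j=0$ (which the multiresolution allows), and verifying that the polynomial-times-geometric sum indeed telescopes to $O(m^{s-1}b^{-2(m-t)/d})$ rather than producing an extra factor of $m$. This is exactly the place where the $\log^{s-1}$ rather than $\log^s$ is won or lost, and it mirrors the corresponding step in \cite{smoovar,owen2008local}; I would lean on those references for the precise lattice-point count. Everything else — the uniformity of $\Gamma_{u\kappa}\le G$, the finiteness of $\|h_u\|_\infty$, and the passage $n=b^m$ — is routine given the lemmas and the hypotheses.
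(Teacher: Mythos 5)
Your proposal is correct and follows essentially the same route as the paper: start from the bound~\eqref{var_multi} (i.e., $\Gamma_{u\kappa}\le G$ together with the vanishing of gain coefficients when $|\kappa|+|u|\le m-t$), apply the lemma bounding $\sigma^2_{u\kappa}$ by a constant times $\Vert h_u\Vert_\infty^2\, b^{-2|\kappa|/d}$, sum the polynomial-times-geometric tail over $\kappa$, and substitute $n=b^m$ to get $O((\log n)^{s-1}/n^{1+2/d})$. The only slip is the lattice-point count: the number of $\kappa\in\natu^{|u|}$ with nonnegative entries and $|\kappa|=K$ is $\binom{K+|u|-1}{|u|-1}$ (not $\binom{K-1}{|u|-1}$ plus ``finitely many'' extras), but since this is still $O(K^{|u|-1})$ your tail estimate and the resulting rate agree with the paper's.
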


\begin{proof}
We know from \eqref{var_multi} that
\[
\begin{split}
\var(\hat{\mu}) &\leq \frac{G}{n} \sum_{|u| > 0} \sum_{|\kappa| > (m - t - |u|)_+} \sigma_{u\kappa}^2\\
& \leq \frac{G}{n} \sum_{|u| > 0}  \left[\wt{C}^2\left( 2 - \frac{2}{b}\right)^3\right]^{|u|} \Vert h_u\Vert_\infty^2 \sum_{|\kappa| > (m - t - |u|)_+} b^{-2|\kappa|/d} \\
& \leq \frac{\wt{G}}{n} \sum_{|u| > 0} \sum_{|\kappa| > (m - t - |u|)_+} b^{-2|\kappa|/d}
\end{split}
\]
where 
\[ \wt{G} = G \left[\tilde{c}_d^2\left( 2 - \frac{2}{b}\right)^3\right]^{|u|} \max_{|u| > 0} \Vert h_u\Vert_\infty^2. 
\]
Since we are interested in the limit as $m \rightarrow \infty$, we may suppose that $m > s + t$. For such large $m$, we have
\[
\sum_{|\kappa| > (m - t - |u|)_+} b^{-2|\kappa|/d} = \sum_{r = m - t - |u| + 1}^\infty b^{-2r/d} \binom{r + |u| - 1}{|u| - 1}
\]
where the binomial coefficient is the number of $|u|$-vectors $\kappa$ of nonnegative integers that sum to $r$. Making the substitution $s = r - m + t + |u|$,
\[
\begin{split}
\sum_{|\kappa| > (m - t - |u|)_+} b^{-2|\kappa|/d} &= b^{(-m + t + |u|)2/d}\sum_{s = 1}^{\infty} b^{-2s/d} \binom{s + m - t - 1}{|u| - 1}\\
& \leq \frac{b^{(t+|u|)2/d}}{n^{2/d}(|u| - 1)!}\sum_{s = 1}^{\infty} b^{-2s/d} (s + m - t - 1)^{|u| - 1}\\
&= \frac{b^{(t+|u|)2/d}}{n^{2/d}(|u| - 1)!}\sum_{s = 1}^{\infty} b^{-2s/d} \sum_{j = 0}^{|u|-1} \binom{|u| - 1}{j}s^j(m - t - 1)^{|u| - 1 - j}\\
&= \frac{b^{(t+|u|)2/d}}{n^{2/d}} \sum_{j = 0}^{|u|-1} \frac{(m - t - 1)^{|u| - 1 - j}}{j!(|u| - 1 -j)!} \sum_{s = 1}^{\infty} b^{-2s/d} s^j \\
& \leq \frac{b^{(t+|u|)2/d}}{n^{2/d}} m^{|u| - 1} |u| \sum_{s = 1}^{\infty} b^{-2s/d} s^{|u| - 1}.
\end{split}
\]
Note by the ratio test it is easy to see that $\sum_{s = 1}^{\infty} b^{-2s/d} s^{|u| - 1}$ converges. Also as $m \leq \log_b(n)$ and $|u| \leq s$ we get
\[
\sum_{|\kappa| > (m - t - |u|)_+} b^{-2|\kappa|/d} = O\left(\frac{(\log n)^{s-1}}{n^{2/d}}\right).
\]
Plugging this back into the bound for the variance we get the desired result.
\end{proof}

\section{Discussion}\label{sec:disc}

Our integration of smooth functions
over an $s$-fold product of $d$-dimensional spaces
has root mean squared error  (RMSE) of $O(n^{-1/2-1/d}(\log(n))^{(s-1)/2})$.
Plain QMC might map $[0,1]^{sd}$ to $\real$.  If
the composition of the integrand with such a mapping
is in BVHK, then QMC attains an error rate
of $O(n^{-1}\log(n)^{sd-1})$.
Our mapping then has the advantage for $d=1$ and $2$.
When the composition is not in BVHK then QMC need not
even converge to the right integral estimate. Then scrambled
nets provide much needed assurance as well as error estimates.

When the composed integrand is smooth, then scrambled
nets applied directly to $[0,1]^{sd}$ would have an RMSE of
$O(n^{-3/2}\log(n)^{(sd-1)/2})$. That is a better asymptotic rate
than we attain here, and it might really be descriptive of finite sample sizes
even for very large $sd$, if the composite integrand were of
low effective dimension \citep{cafmowen}.
If however, the composed integrand is in $L^2$ but is not smooth, then scrambled
nets applied in $sd$ dimensions would have an RMSE of
$o(n^{-1/2})$ but not necessarily better than that. Our proposal is
then materially better for small $d$.

The composed integrand that we actually use is {\sl not}
smooth on $[0,1]^s$.  It generally has discontinuities at all $b$-adic
fractions $t/b^k$ for any of the components of $\bsu$.
For example in the four-fold split of Figure~\ref{fig:base234},
an $\epsilon$ change in $\bsu$ can move a point from
the top triangle to the right hand triangle. These are however
axis-aligned discontinuities.  \cite{wang2011quasi} call these
QMC-friendly discontinuities.  They don't induce infinite variation.

We have used nested uniform scrambles. The same results
apply to other scrambles, notably the linear scrambles of
\cite{mato:1998}. Those scrambles are less space-demanding
than nested uniform scrambles. A central limit theorem
applies to averages over nested uniform scrambles \citep{loh:2003},
but has not been shown for linear scrambles.
\cite{Hong2003335} find that nested uniform scrambles have
stochastically smaller values of a squared discrepancy measure.

The splits we used allowed overlaps on sets of measure zero.
We could also have relaxed 
$\cx = \cup_{a=0}^{b-1}\cx_a$
to $\vol(\cx \setminus\cup_{a=0}^{b-1}\cx_a)=0.$
That could cause the deterministic construction to fail
to be a weak geometric $(t,m,s)$-net but the scrambled
versions would still be geometric $(t,m,s)$-nets with
probability one.

Our main result was proved assuming that all $d_j=d$.
We can extend it to unequal $d_j$ by taking $d=\max_{j\in\otos}d_j$.
To make the extension, one can add $d_j-d$ `do nothing'
dimensions to $\cx^{(j)}$. The splits never take place along those
dimensions, so the cells become cylinder sets and the function
does not depend on the value of those components. 
We can make the extent of those do-nothing dimensions as small
as we like to retain control of the diameter of the splits and then
apply Theorem~\ref{thm:main}.

\bibliographystyle{apalike}
\bibliography{qmc,qmc2}
\end{document}